\documentclass{amsart}

\usepackage{times}
\usepackage{amsmath,amssymb,amsthm}
\usepackage{mathrsfs}
\usepackage{xcolor}
\usepackage{tikz-cd}
\usepackage[margin=1in]{geometry}
\usepackage[dvipsnames]{xcolor}
\usepackage[pdftex,%
    final,%
    colorlinks=true,
    linkcolor=NavyBlue,
    citecolor=NavyBlue,
    filecolor=NavyBlue,
    menucolor=NavyBlue,
    urlcolor=NavyBlue,
    bookmarks=true,
    bookmarksdepth=3,
    bookmarksnumbered=true,
    bookmarksopen=true,
    bookmarksopenlevel=2,%
]{hyperref}

\newcommand{\Q}{\mathbb{Q}}
\newcommand{\HFK}{\mathit{HFK}}
\newcommand{\CFK}{\mathit{CFK}}
\newcommand{\Tor}{\operatorname{Tor}}
\newcommand{\rk}{\operatorname{rk}}
\newcommand{\QUn}{\frac{\Q[U]}{(U^n)}}
\newcommand{\tensQU}{\underset{\Q[U]}{\otimes}}

\theoremstyle{plain}
\newtheorem{theorem}{Theorem}[section]
\newtheorem{lemma}[theorem]{Lemma}
\newtheorem{corollary}[theorem]{Corollary}
\newtheorem{proposition}[theorem]{Proposition}
\theoremstyle{definition}
\newtheorem{definition}[theorem]{Definition}
\newtheorem{conjecture}[theorem]{Conjecture}
\newtheorem{remark}[theorem]{Remark}
\newtheorem{example}[theorem]{Example}

\title{$L$-space knots and $\mathfrak{sl}_n$-like knot Floer homology}
\author{Tristan Wells}
\address{Department of Mathematics, Louisiana State University, LA 70803}
\email{trwells22@lsu.edu}

\begin{document}

\begin{abstract}
It has been of particular interest in recent years to relate categorifications of quantum knot invariants like the Jones polynomial to Floer-theoretic invariants like knot Floer homology using spectral sequences. In pursuit of Rasmussen's conjecture, Dowlin constructed a family of invariants with properties like those of Khovanov and Rozansky's $\mathfrak{sl}_n$-homology theories. In this paper, we explore these invariants for $L$-space knots and thin knots. In particular, for $n\geq 2$ the invariants of thin knots can be computed from a change of coefficients from the minus flavor of knot Floer homology, and the same holds for an $L$-space knot exactly when an explicit condition relating $n$ to its Alexander polynomial is satisfied.
\end{abstract}

\maketitle

\section{Introduction}

The relationship between Floer-theoretic knot invariants of Ozsv\'ath and Szab\'o \cite{OS04} and of Rasmussen \cite{Ras03} and quantum knot invariants of Khovanov \cite{Kho00} and of Khovanov and Rozansky \cite{KR08a, KR08b} has been an active topic in topology for the past twenty-five years. These invariants categorify many knot invariants. That is, the graded Euler characteristic of the knot Floer homology of a knot returns its Alexander polynomial, and the graded Euler characteristic of the Khovanov homology of a knot returns its Jones polynomial. The first relationship between them came from a spectral sequence from the Khovanov homology of a link to the Heegaard Floer homology of its double branched cover, constructed by Ozsv\'ath and Szab\'o in \cite{OS05b}. Since then, a slew of spectral sequences of that form have been discovered, yielding a variety of rank inequalities between the associated homologies as well as indicating the possibility of more interesting relationships.

Consequently, the following was a long-standing conjecture of Rasmussen \cite{Ras05}.

\begin{theorem}[Rasmussen's Conjecture]\label{thm:rasmussen}
\cite{Dow24} For a knot $K\subset S^3$, $\rk(\widetilde{Kh}(K;\Q))\geqslant \rk(\widehat{\HFK}(K;\Q))$, where $\widetilde{Kh}$ denotes reduced Khovanov homology.
\end{theorem}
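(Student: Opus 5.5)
The plan is to follow Dowlin's strategy \cite{Dow24}: build a spectral sequence whose $E_2$ page is (a version of) reduced Khovanov homology and which converges to $\widehat{\HFK}(K;\Q)$, possibly tensored with a vector space of known rank. Ranks can only drop along the pages of a spectral sequence, so this gives $\rk \widetilde{Kh}(K;\Q)\geqslant \rk \widehat{\HFK}(K;\Q)$ directly.

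First, fix a braid or diagram presentation of $K$. Following Ozsv\'ath--Szab\'o's cube of resolutions for knot Floer homology, build a filtered chain complex $C(D)$ from a singular or oriented cube of resolutions of a diagram $D$ of $K$. The complex is assembled from the Heegaard Floer complexes of the resolved (planar, singular) diagrams. Dowlin's key modification is to work with the unoriented cube of resolutions, giving each vertex a complete resolution, a planar unlink. This mirrors Khovanov's cube, and the total homology of the cube filtration should still compute $\widehat{\HFK}(K)\otimes V^{\otimes k}$, with $V$ two-dimensional and $k$ determined by the diagram (the number of extra basepoints). Invariance of the total homology would come from a skein exact triangle for $\HFK$ at each crossing, iterated over all crossings in the standard way, just as the Khovanov cube is built.

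Second, identify the $E_1$ and $E_2$ pages. At each vertex one must compute the Floer homology of the resolved diagram, with the extra basepoints and the $U$-actions suitably set, and show it is isomorphic to the Khovanov vector space $V^{\otimes(\#\text{circles}-1)}$, tensored with the same auxiliary $V^{\otimes k}$. Then one must check that the $d_1$ edge maps agree with Khovanov's merge and split maps, the multiplication and comultiplication of the Frobenius algebra $\Q[x]/(x^2)$. This gives $E_2\cong \widetilde{Kh}(K;\Q)\otimes V^{\otimes k}$ and $E_\infty\cong\widehat{\HFK}(K;\Q)\otimes V^{\otimes k}$. Dividing ranks by $2^k$ then gives the inequality.

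I expect the main obstacle to be identifying the edge maps. Counting holomorphic polygons in the relevant Heegaard multi-diagrams to recover exactly the merge and split maps, including signs over $\Q$ rather than $\mathbb{F}_2$, is delicate. Dowlin handles this by passing through an intermediate theory: an $\mathfrak{sl}_2$-like specialization of a filtered complex related to HOMFLY-PT homology, together with a local model computation of the edge maps on planar pieces. I would try the same route: compute the edge maps on the simplest local pieces (a single merge or split near one crossing) and then argue by naturality and locality that the global $d_1$ is the Khovanov differential. A secondary difficulty is orientation and sign coherence, so that the theory is defined over $\Q$ and not only mod $2$.
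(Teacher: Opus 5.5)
Your top-level strategy is the one the paper relies on. The paper gives no proof of its own; it invokes Dowlin's spectral sequence from reduced Khovanov homology to $\widehat{\HFK}$ \cite{Dow24}, and the inequality follows because ranks can only drop. The mechanism you propose for the $E_1$ page, however, fails as stated. In a hat-flavored cube whose total homology is $\widehat{\HFK}(K)\otimes V^{\otimes k}$, every vertex carries the same $m$ pairs of basepoints. This is forced if the vertex complexes are subquotients of one complex, or come from iterating a skein triangle. For a planar unlink with $c$ components one then has $\widehat{\HFK}\cong V^{\otimes(c-1)}\otimes V^{\otimes(m-c)}=V^{\otimes(m-1)}$, whose rank does not depend on $c$. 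The reduced Khovanov group at such a vertex has rank $2^{c-1}$, and the two ends of every edge of the cube differ by exactly one circle. So no uniform auxiliary factor $V^{\otimes k}$ can make $E_1$ the reduced Khovanov complex tensored with $V^{\otimes k}$. The edge-map identification you single out as the main obstacle never gets started.

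The missing idea is to change the theory itself, not just the method for computing edge maps. The spectral sequence the paper refers to is the $n=2$ member of Dowlin's family, the bottom row of Figure~\ref{fig:diagram}. One works with $\HFK_2$ from Definition~\ref{def:CFKn}, which has three relevant features:
\begin{itemize}
\item the relation $V_i=U_{a(i)}+U_{b(i)}$ removes the curvature;
\item extra basepoints on a component contribute nothing (Theorem~\ref{thm:invariance});
\item an unknot contributes $\Q[U,V]/(UV,V-2U)\cong\Q[U]/(U^2)$, which is Khovanov's Frobenius algebra (Remark~\ref{rem:ring}).
\end{itemize}
It is the surviving $U$-variables that let the vertex groups grow with the number of circles. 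The resulting spectral sequence runs from $Kh$ to $\HFK_2$, not to $\widehat{\HFK}\otimes V^{\otimes k}$. The statement about $\widehat{\HFK}$ comes from the reduced version, where one sets $U=0$ at a marked point. For a knot this forces $V=2U=0$ as well, so the abutment is $\widehat{\HFK}(K)$, possibly tensored with a space of known dimension, while the $E_2$ page is $\widetilde{Kh}(K)$. The $\mathfrak{sl}_2$-like specialization you mention at the end therefore has to be built in from the start. In that setup, invariance of the total homology is the invariance of $\HFK_2$, with no iterated exact triangles needed.
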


\begin{figure}[ht]
    \centering
    \begin{tikzcd}[column sep=large, row sep=tiny]
        H(L)\arrow[r]\arrow[dd, bend right=30]\arrow[dddd, bend right=45]\arrow[ddddd, bend right=55] & HFK(L)\arrow[dd, bend left=30]\arrow[dddd, bend left=45]\arrow[ddddd, bend left=55]\\
        \vdots & \vdots\\
        KR_n(L) \arrow[r, dotted]& \HFK_n(L)\\
        \vdots & \vdots\\
        KR_{3}(L)\arrow[r, dotted] & HFK_{3}(L)\\
        KR_{2}(L)=Kh(L) \arrow[r] & HFK_{2}(L)\\
    \end{tikzcd}
    \caption{Dotted arrows are conjectured, while solid arrows have been proven in suitable versions: the curved arrows on the left are Rasmussen's spectral sequences \cite{Ras15}, those on the right are due to Dowlin \cite[Lemma 2.23]{Dow18}, and the bottom arrow is Dowlin's spectral sequence from reduced Khovanov homology to $\widehat{\HFK}$ \cite{Dow24}.}
    \label{fig:diagram}
\end{figure}

In \cite{Ras05}, Rasmussen suggests that constructing a spectral sequence is a promising way to address this conjecture, a strategy successfully employed by Dowlin in \cite{Dow18, Dow24}. It is natural to want to complete the diagram in Figure~\ref{fig:diagram} with spectral sequences from the Khovanov-Rozansky homologies, $KR_n(K)$, to a proposed Floer-theoretic analog. In \cite{Dow18}, Dowlin defines a candidate family of link invariants, called $\HFK_n(K)$, and presents the generalized conjecture.

\begin{conjecture}\label{conj:dowlin}
\cite[Conjecture 1.6]{Dow18} For a link $L\subset S^3$, there are spectral sequences $KR_n(L)\rightrightarrows \HFK_n(L)$ and $\overline{KR}_n(L)\rightrightarrows \widehat{\HFK}_n(L)$, respecting the gradings specified in \cite[Conjecture 1.6]{Dow18}.
\end{conjecture}

Motivated by Conjecture~\ref{conj:dowlin}, we study $\HFK_n(L)$, in the case that $L$ is a knot, and make the following conjecture.

\begin{conjecture}\label{conj:main}
For any knot $K$ in $S^3$, there is an integer $N(K)$ such that for all $n\geq N(K)$, $\HFK_n(K)$ can be obtained by a change of coefficients from $\HFK^-(K)$; that is,
\[
\HFK_n(K)\cong \big(\HFK^-(K)\tensQU \QUn\big)\oplus \Tor_1^{\Q[U]}\Big(\HFK^-(K); \QUn\Big).
\]
\end{conjecture}

Although further investigation is required to prove Conjecture~\ref{conj:main}, much can be said about two wide classes of knots. Section~\ref{sec:examples} presents supporting examples, including the trefoil (an $L$-space knot) and the figure-eight knot (a thin knot), as well as $T_{3,4}$. We recall that a rational homology 3-sphere $Y$ is an \emph{$L$-space} if $\rk\widehat{HF}(Y;\Q)=|H_1(Y;\mathbb{Z})|$.\footnote{Since $\rk\widehat{HF}(Y;\mathbb{F}_2)\geqslant\rk\widehat{HF}(Y;\Q)\geqslant|H_1(Y;\mathbb{Z})|$ by the universal coefficient theorem and the Euler characteristic, an $L$-space over $\mathbb{F}_2$ is also an $L$-space over $\Q$.} A knot is an \emph{$L$-space knot} if it admits a positive surgery yielding an $L$-space. By \cite{OS05a}, the Alexander polynomial of an $L$-space knot has the form $\Delta_K(t)=\sum_{j=0}^{m-1}(-1)^jt^{e_j}$ with $e_0>e_1>\cdots>e_{m-1}$; we call the differences $e_{j-1}-e_j$ the \emph{gaps} of $\Delta_K$. A knot $K$ is \emph{Floer homologically thin} if its knot Floer homology $\widehat{\HFK}(K;\Q)$ is supported in a single $\delta$-grading. Indeed, our main theorem offers some initial evidence in the direction of Conjecture~\ref{conj:main}:

\begin{theorem}\label{thm:main-intro}
Let $n\geq 2$, and let $K\subset S^3$ be either a thin knot, or an $L$-space knot such that every gap of $\Delta_K$ is at most $n-1$. Then, as ungraded $\Q[U]$-modules,
\[
\HFK_n(K)\cong \big(\HFK^-(K)\tensQU \QUn\big)\oplus \Tor_1^{\Q[U]}\Big(\HFK^-(K); \QUn\Big).
\]
\end{theorem}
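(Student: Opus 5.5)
To prove Theorem~\ref{thm:main-intro}, I will start from Dowlin's description of $\HFK_n(K)$ in \cite{Dow18}. There $\HFK_n(K)$ is the homology of the full knot Floer complex $\CFK^\infty$-type complex over $\Q[U,V]$, specialized so that the two variables are tied together by a relation of the form $U^{n-1}\sim V$ (equivalently, the $V$-action is identified with $U^{n-1}$, producing a $\Q[U]$-complex $C_n$). Since both thin knots and $L$-space knots have knot Floer complexes determined up to homotopy equivalence by simple models, the plan is to use these models and compute directly:
\begin{itemize}
\item for thin knots, a direct sum of one staircase of step length one together with finitely many unit boxes (Petkova's model);
\item for $L$-space knots, a single staircase whose step lengths are the gaps of $\Delta_K$ \cite{OS05a}.
\end{itemize}
Homology commutes with direct sums, so it suffices to compute $H_*(C_n)$ for each summand. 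The right-hand side of the theorem is also additive, so I can compare the two sides summand by summand.

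The right-hand side is computed from $\HFK^-$ by setting $V=0$. A box contributes a $\Q[U]/(U)$ torsion summand to $\HFK^-$. A staircase contributes a free $\Q[U]$ summand plus torsion summands $\Q[U]/(U^{\ell})$, where $\ell$ ranges over the horizontal step lengths. Then $\Q[U]/(U^\ell)\otimes\QUn$ and $\Tor_1(\Q[U]/(U^\ell),\QUn)$ are both $\Q[U]/(U^{\min(\ell,n)})$, while the free summand gives a single $\QUn$. So the target is a single $\QUn$ plus two copies of $\Q[U]/(U^{\min(\ell,n)})$ for each torsion summand.

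The core step is to compute $H_*(C_n)$ for a box and for a staircase once $V$ is replaced by $U^{n-1}$ (together with whatever truncation Dowlin's definition imposes, which is what produces the $U^n$).
\begin{itemize}
\item \textbf{Box.} The differential becomes a $2$-step filtered complex, with horizontal arrow $U$ and vertical arrow $U^{n-1}$. For $n\ge2$ the vertical arrow has higher $U$-power, so a change of basis cancels pairs and leaves two copies of $\Q[U]/(U)$, as required. This is where the hypothesis $n\ge2$ enters.
\item \textbf{Staircase.} The complex is a zigzag $x_0\to y_1\leftarrow x_1\to\cdots$ with arrows $U^{a_i}$ and $V^{b_i}\mapsto U^{(n-1)b_i}$. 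I will compute its homology via the Smith normal form of the resulting matrix over the PID $\Q[U]$ (after reducing modulo $U^n$ as the definition dictates).
\end{itemize}

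I expect the main obstacle to be showing that the zigzag's Smith normal form splits into independent pieces exactly when every gap is at most $n-1$. Each vertical arrow becomes $U^{(n-1)b}$, which vanishes modulo $U^n$ whenever $b\ge1$ is large enough; in general this needs $(n-1)b\ge n$, i.e.\ $b\ge2$ or $n$ large. The delicate case is $b=1$, where the vertical arrow is $U^{n-1}$ and can interact with a horizontal arrow $U^{a}$.

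To handle this, I will use the symmetry of the staircase to pair each horizontal step with a vertical step of the same length, and do elementary column operations along the staircase. When $a\le n-1$, a horizontal arrow $U^a$ divides the competing vertical term. The operations then cancel the cross terms, and the result is the direct sum predicted above. When some gap exceeds $n-1$, the cross terms would survive, and this explains why the hypothesis is needed (sharpness is not required for the theorem). Finally I will assemble the summands, ignoring gradings since the statement is only ungraded.
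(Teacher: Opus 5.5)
Your outline follows the paper's route. You reduce by the structure theorems to one staircase plus boxes, specialize summand by summand, and absorb each length-one vertical arrow by an $R$-linear change of basis, $R=\QUn$, using a horizontal arrow $U^h$ with $h\le n-1$. (Petkova's decomposition is over $\mathbb{F}_2$ and for the filtered complex; over $\Q[U,V]$ you need Theorem~\ref{thm:thin-structure}.) Comparing with an explicit computation of the right-hand side, instead of passing through $\widetilde C_n\cong\CFK^-(K)\tensQU\QUn$ and the universal coefficient theorem, is an inessential difference.

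However, the two computations that carry the argument are wrong as stated. For the box, $\HFK^-$ receives two torsion summands $\Q[U]/(U)$ (the classes of $x_2$ and $x_4$), so the right-hand side receives $\Q^4$, not $\Q^2$. On the left nothing ``cancels'', because $U$ is not a unit in $R$. After $x_2\mapsto x_2-nU^{n-2}x_3$ the box becomes two copies of $R\xrightarrow{U}R$, each with homology $U^{n-1}R\oplus R/UR\cong\Q^2$, so again $\Q^4$. Your ``two copies of $\Q[U]/(U)$'' is what this change of basis gives over $\Q[U]$ before imposing $U^n=0$, and the lost kernel $U^{n-1}R$ is exactly what matches the $\Tor_1$ term. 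So the truncation cannot be left as ``whatever Dowlin's definition imposes''. The input is Dowlin's Lemma 2.16: $\HFK_n(K)\cong H_*\big(\CFK_{U,V}(K)/(UV,V-nU^{n-1})\big)$ as $\Q[U]$-modules, with $\Q[U,V]/(UV,V-nU^{n-1})\cong R$. It is the relation $UV=0$ that forces $U^n=0$ and $V^b=0$ for $b\ge 2$.

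For the staircase, pairing a horizontal step with a vertical step of equal length via the symmetry of $\Delta_K$ plays no role. Moreover, column operations alone fail in the left form, which is the only form arising for $L$-space knots. Already for the trefoil at $n=2$, $\partial_2x_2=Ux_1+2Ux_3$ is a single column. In general, the entry $nU^{n-1}$ in the row of $x_m$ (present whenever $k_{m-1}=1$) lies in the only column meeting that row. The working move is the row operation of Lemma~\ref{lem:staircase}. When $h_i\le n-1$ we have $\partial_nx_i=U^{h_i}\big(x_{i-1}+\varepsilon_i nU^{n-1-h_i}x_{i+1}\big)$, so replacing each target $x_{i-1}$ by the bracket is a unitriangular change of basis of the targets, giving $C_n\cong\widetilde C_n$.

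Column operations are what is needed in the right form (thin knots with $\tau<0$), where $x_1$ has no horizontal arrow. There you use the horizontal arrow into the target. The resulting cross terms cascade, so the new sources must be defined recursively from the end, $x_i'=x_i-\varepsilon_i nU^{n-1-k_{i+1}}x_{i+2}'$; alternatively, reduce to the left form by transposition as in Remark~\ref{rem:mirror}.

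Finally, your claim that the splitting occurs exactly when all gaps are at most $n-1$ is false for staircases in general. For example, the left-form staircase with $m=3$ and steps $(n,2)$ has $C_n=\widetilde C_n$, and a defect can also be absorbed further along (Proposition~\ref{prop:sharp}, Example~\ref{ex:defects}). Only the ``if'' direction is needed, so this does not affect the theorem.
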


That is to say, for such knots, $\HFK_n(K)$ is a change of coefficients from $\HFK^-(K)$.

\begin{corollary}\label{cor:determined}
For any $L$-space knot $K$, $\HFK_n(K)$ is determined by its Alexander polynomial. For any thin knot $K$, $\HFK_n(K)$ is determined by its Alexander polynomial and $\tau(K)$; moreover, for $n\geq 2$,
\[
\HFK_n(K)\cong \QUn\oplus\Q^{\det(K)-1}
\]
as ungraded $\Q[U]$-modules.
\end{corollary}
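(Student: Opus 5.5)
The corollary has two parts. The first says that $\HFK_n(K)$ depends only on classical data for $L$-space knots and thin knots. The second is an explicit formula for thin knots when $n\geq 2$. I would prove the first part for all $n$ from the structure of the full knot complex, and the second from Theorem~\ref{thm:main-intro} by a direct module computation.

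\emph{Determination.} Dowlin's $\HFK_n(K)$ is built from the knot Floer complex: a twisted or deformed version of $\CFK$ over $\Q[U]$, with the differential modified using $n$. I would check from the definition that $\HFK_n(K)$ is an invariant of the bifiltered chain homotopy type of $\CFK^\infty(K)$ over $\Q[U,U^{-1}]$, or of $\CFK_{\Q[U,V]}(K)$. This step is the main point to verify. If the construction uses extra data beyond $\CFK$, such as a particular Heegaard diagram or a choice of basis, one must show that chain homotopy equivalences of $\CFK$ induce isomorphisms on $\HFK_n$. I expect this is either part of Dowlin's invariance proof or follows from it, since filtered homotopy equivalences commute with the deformation.

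Given that invariance, the two cases follow from known structure theorems.
\begin{itemize}
\item For an $L$-space knot, Ozsv\'ath--Szab\'o \cite{OS05a} show that $\CFK^\infty(K)$ is homotopy equivalent to the staircase complex. Its step lengths are the gaps $e_{j-1}-e_j$ of $\Delta_K$, so it is determined by $\Delta_K$. Hence $\HFK_n(K)$ is determined by $\Delta_K$ for every $n$, including those $n$ not covered by Theorem~\ref{thm:main-intro}.
\item For a thin knot, Petkova's theorem says $\CFK^\infty(K)$ is homotopy equivalent to a staircase of step length one, determined by $\tau(K)$, plus a direct sum of unit squares. The squares are placed according to the coefficients of $\Delta_K$. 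So $\HFK_n(K)$ is determined by $\Delta_K$ and $\tau(K)$.
\end{itemize}

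\emph{The formula.} Let $K$ be thin and $n\geq 2$. By Theorem~\ref{thm:main-intro}, it suffices to compute the change of coefficients from $\HFK^-(K)$. From the structure above, as ungraded modules
\[
\HFK^-(K)\cong \Q[U]\oplus \big(\Q[U]/(U)\big)^{k}
\]
for some $k\geq 0$. Setting $U=0$ gives $\widehat{\HFK}$. By the universal coefficient sequence this has rank $1+2k$: one from the free summand, and two from each torsion summand (one from the tensor term, one from the $\Tor$ term). For a thin knot the rank of $\widehat{\HFK}$ equals the sum of the absolute values of the coefficients of $\Delta_K$, which is $|\Delta_K(-1)|=\det(K)$. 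Hence $k=(\det(K)-1)/2$.

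Now compute the two terms. For the tensor term,
\[
\Q[U]\tensQU \QUn\cong \QUn, \qquad \Q[U]/(U)\tensQU \QUn\cong \Q.
\]
For the $\Tor$ term, the resolution $0\to \Q[U]\xrightarrow{U}\Q[U]\to \Q[U]/(U)\to 0$ gives
\[
\Tor_1^{\Q[U]}\big(\Q[U]/(U);\QUn\big)\cong \ker\big(U\colon \QUn\to \QUn\big)\cong \Q,
\]
and the free summand contributes no $\Tor$. Adding the pieces,
\[
\HFK_n(K)\cong \QUn\oplus \Q^{k}\oplus \Q^{k}=\QUn\oplus\Q^{\det(K)-1}.
\]
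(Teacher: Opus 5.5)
Your proposal is correct and follows essentially the same route as the paper. Determination comes from the structure theorems (Theorems~\ref{thm:lspace-structure} and~\ref{thm:thin-structure}, the latter being the paper's version over $\Q$ of Petkova's result), and the thin-knot formula comes from Theorem~\ref{thm:main-intro} together with $\HFK^-(K)\cong\Q[U]\oplus\Q^{k}$ and $1+2k=\rk\widehat{\HFK}(K)=\det(K)$, exactly as in Remark~\ref{rem:thin-formula}. The invariance step you flag as the main point to verify is supplied by Theorem~\ref{thm:knot-quotient}, which computes $\HFK_n(K)$ as the homology of $\CFK_{U,V}(K)\otimes_{\Q[U,V]}\Q[U,V]/(UV,V-nU^{n-1})$; this depends only on the homotopy type of $\CFK_{U,V}(K)$ over $\Q[U,V]$ (Remark~\ref{rem:tensor}).
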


While the first part of Corollary~\ref{cor:determined} is implicitly known from the definition of $\HFK_n(L)$ for knots and does not depend on Theorem~\ref{thm:main-intro}, this follows more explicitly from \cite{OS05a}, where we see that the knot Floer complex (and hence $\HFK^-(K)$) of any $L$-space knot is determined by its Alexander polynomial, and from \cite{Pet13}, where we see that the knot Floer complex of any thin knot is determined from its Alexander polynomial and $\tau$ (see Theorems~\ref{thm:thin-structure} and~\ref{thm:lspace-structure}). For quasi-alternating knots $\tau=-\sigma/2$ \cite{MO08}, so for these knots the signature may be used in place of $\tau$. The second part follows from Theorem~\ref{thm:main-intro} (see Remark~\ref{rem:thin-formula}). The work in \cite{Pet13} will play a more critical role in the proof for thin knots in Section~\ref{sec:proof}.

The restriction to large $n$ in Theorem~\ref{thm:main-intro} is necessary. For $n=1$, $\HFK_1(K)\cong\widehat{HF}(S^3)\cong\Q$ for every knot (Remark~\ref{rem:n=1}), while the right-hand side is $\widehat{\HFK}(K)$. For $L$-space knots, Proposition~\ref{prop:sharp} computes $\HFK_n(K)$ for every $n\geq2$, and shows that the isomorphism of Theorem~\ref{thm:main-intro} holds if and only if an explicit count read off from the gaps of $\Delta_K$, vanishes. For torus knots this gives a clean answer.

\begin{corollary}\label{cor:torus}
Let $2\leq p<q$ be coprime and $n\geq2$. The isomorphism of Theorem~\ref{thm:main-intro} holds for $T_{p,q}$ if and only if $n\geq p$. In particular, it fails for $T_{3,4}$ with $n=2$, where $\dim_\Q\HFK_2(T_{3,4})=6$ while the right-hand side has dimension $8$.
\end{corollary}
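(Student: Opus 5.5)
Corollary~\ref{cor:torus} should follow by specializing Proposition~\ref{prop:sharp}, the criterion for $L$-space knots, to $K=T_{p,q}$. That proposition says the isomorphism of Theorem~\ref{thm:main-intro} holds for an $L$-space knot exactly when an explicit count read off from the gaps of $\Delta_K$ vanishes. I expect this count to be the number (perhaps weighted) of gaps $e_{j-1}-e_j$ exceeding $n-1$, or a related excess such as $\sum_j \max(0,\,e_{j-1}-e_j-(n-1))$ over the relevant gaps. The sufficiency direction of Theorem~\ref{thm:main-intro} already shows that this count vanishes once all gaps are at most $n-1$. So the plan is to determine the gaps of $\Delta_{T_{p,q}}$ and show that the maximal gap is exactly $p$.

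\textbf{The gaps of $\Delta_{T_{p,q}}$.} Write
\[
\Delta_{T_{p,q}}(t)=\frac{(t^{pq}-1)(t-1)}{(t^p-1)(t^q-1)},
\]
up to symmetrization. The exponents are governed by the semigroup $S=\langle p,q\rangle$: the Alexander polynomial equals $(1-t)\sum_{s\in S}t^s$ truncated at the conductor. The gaps of $\Delta$ are then the lengths of the maximal runs of consecutive elements of $S$ and of consecutive non-elements of $S$ below the conductor $(p-1)(q-1)$.

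The maximal run of non-elements is $\{1,\dots,p-1\}$, which has length $p-1$. Since $p<q$, any gap run strictly between elements of $S$ is at most this long. Runs of consecutive elements of $S$ below the conductor have length at most $p-1$ as well, with the long run $\{kp,\dots\}$ only near the conductor. Hence the corresponding gap in $\Delta$, which is the run length plus $1$ for the alternating pattern, is at most $p$. This bound is attained: the first gap from the top is $e_0-e_1=1$, and the gap $e_1-e_2$ equals $p-1$ or $p$, depending on the indexing convention. I need to verify this carefully against the convention that the sharp criterion in Proposition~\ref{prop:sharp} is counting.

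The claim to establish is that the obstruction count vanishes if and only if $n\ge p$. The $T_{3,4}$ example provides a check: $\Delta_{T_{3,4}}=t^6-t^5+t^3-t+1$ (shifted), whose gaps are $1,2,2,1$. The largest gap is $2$, so Theorem~\ref{thm:main-intro} already applies for $n\ge3$. For $n=2$ the isomorphism must fail, even though each gap $2$ exceeds $n-1=1$ by only $1$. This is consistent with the threshold $n\ge p=3$, provided the count in Proposition~\ref{prop:sharp} is nonzero precisely when some gap exceeds $n-1$. The main obstacle is aligning the general count in Proposition~\ref{prop:sharp} with these semigroup gaps. In particular, I must show the count is positive whenever $n\le p-1$, not merely that the sufficient condition of Theorem~\ref{thm:main-intro} fails. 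I would first try to exhibit a single gap of length $p-1\ge n$, namely the one coming from $\{1,\dots,p-1\}$ in the semigroup, and check that it contributes positively.

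\textbf{Dimensions for $T_{3,4}$, $n=2$.} $\HFK^-(T_{3,4})$ is the staircase complex with steps $1,2,2,1$, and its homology is $\Q[U]\oplus\Q[U]/(U)\oplus\Q[U]/(U^2)$ as a $\Q[U]$-module. Tensoring with $\Q[U]/(U^2)$ gives dimension $2+1+2=5$. The $\Tor_1$ term gives $0+1+2=3$. The total is $8$. The value $\dim_\Q\HFK_2(T_{3,4})=6$ would come directly from the formula in Proposition~\ref{prop:sharp} (or from the explicit computation in Section~\ref{sec:examples}). Since $6\ne 8$, the isomorphism fails.
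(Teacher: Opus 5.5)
Your overall plan (specialize Proposition~\ref{prop:sharp} and read the gaps off the semigroup $S=\langle p,q\rangle$) is the paper's, but two steps fail as written.

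\textbf{The gap computation.} The gaps of $\Delta_{T_{p,q}}$ are exactly the run lengths, not the run length plus one. A run $\{a,\dots,a+\ell-1\}\subset S$ contributes $(1-t)(t^a+\cdots+t^{a+\ell-1})=t^a-t^{a+\ell}$. So the largest gap is $p-1$, with no dependence on indexing conventions; your own $T_{3,4}$ check (gaps $1,2,2,1$, $p=3$) shows this. This matters: with largest gap $p$, Theorem~\ref{thm:main-intro} would only cover $n\geq p+1$, leaving $n=p$ open. Two smaller points. The bound on runs of non-elements comes from $s\in S\Rightarrow s+p\in S$, not merely from $p<q$. The bound on runs of elements needs an argument, and your remark about a long run ``near the conductor'' is not one. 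The paper uses the symmetry of $\Delta$: its gap sequence is a palindrome, so runs of elements are runs of non-elements in reverse order.

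\textbf{The necessity direction.} This is left open, and the proviso you lean on (``the count is nonzero precisely when some gap exceeds $n-1$'') is false. The count in Proposition~\ref{prop:sharp} is $N_n(C)$, the number of surviving defects. These are sources $x_i$ with $h_i\geq n$ \emph{and} $v_i=1$ that are not absorbed by a later source $x_r$ with $h_r\leq n-1$. A long step by itself contributes nothing:
\begin{itemize}
\item in Example~\ref{ex:defects} with $n=3$, the step $k_1=3>n-1$ gives a defect that is absorbed, so $N_3(C)=0$;
\item for $T_{p,q}$, the step $k_2=p-1$ is the vertical arrow out of $x_2$, which simply vanishes in $C_n$.
\end{itemize}

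The gap you propose to use, coming from $\{1,\dots,p-1\}$, does work, but for a reason you have not supplied. The lowest terms $1-t+t^p$ of $\Delta$, together with its symmetry, give $k_{m-2}=p-1$ and $k_{m-1}=1$. So this gap is the horizontal step out of the \emph{last} source, with $(h_{m-1},v_{m-1})=(p-1,1)$. For $n\leq p-1$ we have $U^{p-1}=0$ in $\QUn$, so $\partial_nx_{m-1}=nU^{n-1}x_m$ and $x_{m-1}$ is a defect. Since no source follows it, $r(m-1)=m+1$ and it survives. Thus $N_n(T_{p,q})\geq1$. Proposition~\ref{prop:sharp} then makes $\dim_\Q\HFK_n(T_{p,q})$ strictly smaller than $\dim_\Q H_*(\widetilde{C}_n)$, which is the dimension of the right-hand side by the universal coefficient theorem. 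Your $T_{3,4}$ count ($5+3=8$ versus $6$) is fine.
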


\subsection*{Organization}
In Section~\ref{sec:background}, we introduce and review a construction of the invariants $\HFK_n(K)$ as in \cite{Dow18}. Section~\ref{sec:examples} compares several computations of these invariants against older knot Floer homology flavors for representatives from various classes of knots. In Section~\ref{sec:proof}, we prove the main theorem.

\subsection*{Tools Used}
All results of this note are due to the author. Claude Opus 5.5 was used in proofreading, checking proof details, and generating and checking examples.

\subsection*{Acknowledgments}
This note is modified from the author's Ph.D. dissertation at Michigan State University.

I am eternally grateful to my Ph.D. advisor Matthew Hedden for helpful conversation and comments on early drafts. I thank Charles Stine and Shea Vela-Vick for helpful conversations. I also thank Chris St. Clair, Dean Spyropoulos, and Chen Zhang for comments on early drafts. 

I also want to thank the National Science Foundation for support with grants
DMS-2104664, the RTG NSF Award DMS-2135960, DMS-1709016, and the RTG NSF Award DMS-2231492. Their financial support made this research possible.

\section{Background}\label{sec:background}

\subsection{Heegaard Diagrams}
A common way to depict 3-manifolds in low-dimensional topology is using a Heegaard diagram, which presents a convenient Morse decomposition of the 3-manifold. These can be enhanced to encode embedded links in the 3-manifold as well.

\begin{definition}\label{def:HD}
(cf.\ \cite[Definition 2.1]{Dow18}) A (multi-pointed) \emph{Heegaard diagram} $\mathcal{H}$ for an oriented link $L$ in a closed, oriented 3-manifold $Y$ is the data $(\Sigma,\boldsymbol{\alpha},\boldsymbol{\beta},\mathbf{w},\mathbf{z})$ where
\begin{itemize}
\item $\Sigma$ is a closed, oriented, genus $g$ surface, called the Heegaard surface.
\item $\boldsymbol{\alpha}$ (respectively, $\boldsymbol{\beta}$) is a set of $g+k-1$ disjoint embedded circles $\{\alpha_1,\dots,\alpha_{g+k-1}\}$ (resp.\ $\{\beta_1,\dots,\beta_{g+k-1}\}$) in $\Sigma$ such that the alpha curves intersect the beta curves transversely.
\item $\boldsymbol{\alpha}$ and $\boldsymbol{\beta}$ each span a $g$-dimensional subspace of $H_1(\Sigma;\mathbb{Z})$.
\item $\mathbf{w}$ and $\mathbf{z}$ are each sets of $k$ basepoints $\{w_1,\dots,w_k\}$ and $\{z_1,\dots,z_k\}$, where $k$ is at least the number of components of $L$, such that each component of $\Sigma\setminus\boldsymbol{\alpha}$ and each component of $\Sigma\setminus\boldsymbol{\beta}$ contains exactly one $\mathbf{w}$ basepoint and exactly one $\mathbf{z}$ basepoint.
\end{itemize}
\end{definition}

The manifold $Y$ is constructed from this data by thickening $\Sigma$ to $\Sigma\times[0,1]$ and attaching thickened discs along the alpha curves at $\Sigma\times\{0\}$ and along the beta curves at $\Sigma\times\{1\}$ and then filling in the resulting 2-sphere boundaries with 3-balls.

The link $L$ in $Y$ can be seen as follows. When thickening $\Sigma$, also thicken the $\mathbf{w}$ and $\mathbf{z}$ basepoints to get $\mathbf{w}\sqcup\mathbf{z}\times[0,1]\subset\Sigma\times[0,1]$. Since each component of $\Sigma\setminus\boldsymbol{\alpha}$ has a $\mathbf{w}$ and $\mathbf{z}$ basepoint, we connect those arcs on the $\Sigma\times\{0\}$ surface with an arc in the 3-ball attached to that component's $S^2$ boundary. We do this similarly for the $\Sigma\times\{1\}$ boundaries.

\begin{definition}\label{def:punctured}
In \cite[Definition 2.2]{Dow18}, Dowlin defined a notion of a \emph{punctured Heegaard diagram}, which adds to $(\Sigma,\boldsymbol{\alpha},\boldsymbol{\beta},\mathbf{w},\mathbf{z})$ the following data:
\begin{itemize}
\item A basepoint $p$, called the \emph{puncture}.
\item A curve $\alpha_{g+k}$ and $\beta_{g+k}$ which separate $p$ from the $\mathbf{w}$ and $\mathbf{z}$ basepoints, such that $\boldsymbol{\alpha}$ and $\boldsymbol{\beta}$ still intersect transversely and span a $g$-dimensional subspace of $H_1(\Sigma;\mathbb{Z})$.
\end{itemize}
\end{definition}

\begin{remark}\label{rem:puncture}
In the above construction of the pair $(Y,L)$, puncturing the Heegaard diagram is like considering $L\sqcup\mathcal{U}$ (where $\mathcal{U}$ is the unknot), where the unlinked unknot carries a pair of basepoints $w_0$ and $z_0$ which coincide at $p$; see \cite[Remark 2.3]{Dow18}.
\end{remark}

\subsection{Knot Floer Homology}
Here, we provide the groundwork for Dowlin's new family of invariants in \cite{Dow18}. For another overview of knot Floer homology, we direct the reader to \cite{Hom21}. For greater detail, see \cite{OS04}.

All of our definitions will be made over $\Q$ since, as in \cite{Dow18, Dow24}, the tools needed depend on working with a field of characteristic zero (we need $n$, and also $2$, for the spectral sequence of \cite{Dow24}, to be invertible), at the cost of keeping track of signs. 
However, most definitions here can be adapted to one's preferred field for other applications. Knot Floer complexes come in a variety of flavors, but we are only interested in two: the minus flavor $\CFK^-(L)$ and the so-called master complex $\CFK_{U,V}(L)$.

Given a multi-pointed unpunctured Heegaard diagram $\mathcal{H}$ for a null-homologous link $L\subset Y$, the so-called \emph{master complex} $\CFK_{U,V}(\mathcal{H})$ is the free $\Q[U_1,\dots,U_k,V_1,\dots,V_k]$-module generated by $\mathbb{T}_\alpha\cap\mathbb{T}_\beta$, the intersection points of the totally real Lagrangian tori $\mathbb{T}_\alpha$ and $\mathbb{T}_\beta$ in the symplectic manifold $\mathrm{Sym}^{g+k-1}(\Sigma)$ (or $\mathrm{Sym}^{g+k}(\Sigma)$ for punctured diagrams), given by the symmetric product of the $\boldsymbol{\alpha}$ (and $\boldsymbol{\beta}$, for $\mathbb{T}_\beta$) curves on $\mathcal{H}$. The differential in this complex counts the rigid, unparametrized Maslov index 1 pseudo-holomorphic disks from a generator $x$ to $y$, keeping track of which basepoints the disks cross and their multiplicities:
\[
\partial_{U,V}(x)=\sum_{y\in\mathbb{T}_\alpha\bigcap\mathbb{T}_\beta}\ \sum_{\substack{\phi\in\pi_2(x,y)\\ \mu(\phi)=1}}\#\widehat{\mathcal{M}}(\phi)U_1^{n_{w_1}(\phi)}\cdots U_k^{n_{w_k}(\phi)}V_1^{n_{z_1}(\phi)}\cdots V_k^{n_{z_k}(\phi)}y.
\]

\begin{remark}\label{rem:curved}
\cite[Lemma 2.4]{Dow18} In general, $\CFK_{U,V}(\mathcal{H})$ is not a true chain complex, but a curved one. Let $w_{a(i)}$ be the $\mathbf{w}$ basepoint connected to $z_i$ via an arc in the alpha handlebody, and $w_{b(i)}$ be the one connected to $z_i$ via an arc in the beta handlebody. Then the curved complex has potential
\[
\partial_{U,V}^2=\sum_{i=1}^k(U_{a(i)}-U_{b(i)})V_i.
\]
When each component of $L$ has exactly one $\mathbf{w}$ and $\mathbf{z}$ basepoint, $\CFK_{U,V}(\mathcal{H})$ is a true complex.
\end{remark}

The complex $\CFK^-(\mathcal{H})$ is similarly defined as the free $\Q[U_1,\dots,U_k]$-module generated by $\mathbb{T}_\alpha\cap\mathbb{T}_\beta$. In the minus flavor, the differential counts disks which only pass through $\mathbf{w}$ basepoints:
\[
\partial^-(x)=\sum_{y\in\mathbb{T}_\alpha\bigcap\mathbb{T}_\beta}\ \sum_{\substack{\phi\in\pi_2(x,y)\\ \mu(\phi)=1\\ n_{\mathbf{z}}(\phi)=0}}\#\widehat{\mathcal{M}}(\phi)U_1^{n_{w_1}(\phi)}\cdots U_k^{n_{w_k}(\phi)}y.
\]

The definition of $\CFK_{U,V}(L)$ is the same for punctured Heegaard diagrams, where the differential doesn't count discs that go through $p$:
\[
\partial_{U,V}(x)=\sum_{y\in\mathbb{T}_\alpha\bigcap\mathbb{T}_\beta}\ \sum_{\substack{\phi\in\pi_2(x,y)\\ \mu(\phi)=1\\ n_p(\phi)=0}}\#\widehat{\mathcal{M}}(\phi)U_1^{n_{w_1}(\phi)}\cdots U_k^{n_{w_k}(\phi)}V_1^{n_{z_1}(\phi)}\cdots V_k^{n_{z_k}(\phi)}y.
\]

The chain homotopy type of each complex is an invariant of the link $L$ (for the curved complex $\CFK_{U,V}$, see \cite{Zem19}), so we often ignore the Heegaard diagram dependence and write $\CFK_{U,V}(L)$ and $\CFK^-(L)$. The complex $\CFK^-(L)$ is a true chain complex, and the homology $\HFK^-(L)=H_*(\CFK^-(L))$ is the familiar minus flavor of knot Floer homology. We make the following remarks.

\begin{remark}\label{rem:thin}
The thin knots considered in this note enjoy another noteworthy trait: $\CFK_{U,V}(K)$ has a model in which every arrow has length one, that is, every arrow is labeled $U$ or $V$. This is discussed at length in \cite{Pet13} and presented quickly in Section~\ref{sec:proof} (Theorem~\ref{thm:thin-structure}). This fails for $L$-space knots in general: $\CFK_{U,V}(T_{3,4})$ has arrows of length two (Example~\ref{ex:T34}).
\end{remark}

\begin{remark}
Since, in our setting, the manifold $Y$ will be $S^3$, we needn't worry about the splitting of these theories over $\mathrm{Spin}^c$ structures. There is also a great deal of grading information in these invariants, but it isn't needed for the main theorem, so we suppress it.
\end{remark}

\subsection{The Family $\HFK_n$}
In \cite{Dow18}, Dowlin constructs a family of link invariants designed to mimic features of the $\mathfrak{sl}_n$-homology theories of Khovanov and Rozansky \cite{KR08a}.

\begin{definition}\label{def:CFKn}
\cite[Definition 2.5]{Dow18} Given a multi-pointed punctured Heegaard diagram $\mathcal{H}$ for a link $L$ in a 3-manifold $Y$, define the chain complex $\CFK_n(\mathcal{H})$ to be the quotient
\[
\CFK_n(\mathcal{H})=\CFK_{U,V}(\mathcal{H})\Bigg/\left(V_i-\frac{U_{a(i)}^n-U_{b(i)}^n}{U_{a(i)}-U_{b(i)}}\right)
\]
where $a(i)$ and $b(i)$ are as in~\ref{rem:curved}, the difference quotient denotes the polynomial $\sum_{j=0}^{n-1}U_{a(i)}^jU_{b(i)}^{n-1-j}$, and the differential $\partial_n$ is the induced differential from $\partial_{U,V}$.
\end{definition}

\begin{remark}\label{rem:onevariable}
In the case where $L$ is a knot, we may only have one variable, that is, $a(1)=b(1)$. The difference quotient is then $\sum_{j=0}^{n-1}U^jU^{n-1-j}=nU^{n-1}$, which is the derivative
\[
V=\frac{d}{dU}\big(U^n\big)=nU^{n-1}.
\]
\end{remark}

\begin{theorem}\label{thm:dsquared}
\cite[Lemma 2.6]{Dow18} The map $\partial_n:\CFK_n(\mathcal{H})\to\CFK_n(\mathcal{H})$ satisfies $\partial_n^2=0$.
\end{theorem}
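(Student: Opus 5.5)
The plan is to deduce $\partial_n^2=0$ from the curvature formula of Remark~\ref{rem:curved}, adapted to punctured diagrams. Since the quotient in Definition~\ref{def:CFKn} is by an ideal of the coefficient ring $\Q[U_1,\dots,U_k,V_1,\dots,V_k]$, and $\partial_{U,V}$ is linear over this ring, $\partial_{U,V}$ descends to a well-defined map $\partial_n$. Moreover, $\partial_n^2$ is the image of $\partial_{U,V}^2$. So it suffices to show that $\partial_{U,V}^2$ becomes zero after the substitution $V_i\mapsto \frac{U_{a(i)}^n-U_{b(i)}^n}{U_{a(i)}-U_{b(i)}}$.

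First I would establish the curvature identity for the punctured complex:
\[
\partial_{U,V}^2=\sum_{i=1}^k (U_{a(i)}-U_{b(i)})V_i\cdot \mathrm{id}.
\]
The route is the standard degeneration argument. For each pair of generators $x,z$, consider the ends of the one-dimensional moduli spaces of Maslov index $2$ classes $\phi\in\pi_2(x,z)$ with $n_p(\phi)=0$.

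The ends are of two kinds.
\begin{itemize}
\item Broken pairs of index-one disks, which contribute exactly the coefficient of $z$ in $\partial_{U,V}^2 x$.
\item Boundary degenerations (alpha or beta disk bubbles), which can only occur when $x=z$.
\end{itemize}

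Since $n_p=0$ is required, any boundary degeneration must be a component of $\Sigma\setminus\boldsymbol\alpha$ or $\Sigma\setminus\boldsymbol\beta$ that avoids the puncture. The curves $\alpha_{g+k}$ and $\beta_{g+k}$ separate $p$ from the other basepoints, so the components containing $p$ are excluded. Each remaining component contains exactly one $w$ and one $z$ basepoint.

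I would then group the contributions by $z_i$. The alpha degeneration through $z_i$ contains $w_{a(i)}$ and contributes $U_{a(i)}V_i$. The beta degeneration through $z_i$ contains $w_{b(i)}$ and contributes, with the opposite sign, $-U_{b(i)}V_i$. Summing over $i$ gives the stated potential.

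This step is the main obstacle. It requires:
\begin{itemize}
\item the count of boundary degenerations, each counting $\pm1$, with coherent orientations over $\Q$;
\item the sign difference between alpha and beta degenerations;
\item checking that the puncture really kills the extra degenerations that would otherwise spoil the formula in the punctured setting.
\end{itemize}
I would follow Zemke's treatment \cite{Zem19} and \cite[Lemma 2.4]{Dow18} for these points, rather than redo the analysis.

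Finally, substitute into the potential:
\[
(U_{a(i)}-U_{b(i)})\cdot \frac{U_{a(i)}^n-U_{b(i)}^n}{U_{a(i)}-U_{b(i)}} = U_{a(i)}^n-U_{b(i)}^n .
\]
Thus $\partial_n^2=\sum_i (U_{a(i)}^n-U_{b(i)}^n)$. I would reindex this sum using the fact that $i\mapsto a(i)$ and $i\mapsto b(i)$ are both bijections of $\{1,\dots,k\}$. Each component of $\Sigma\setminus\boldsymbol\alpha$ (respectively $\Sigma\setminus\boldsymbol\beta$) away from $p$ contains exactly one $w$ and one $z$, so each $w_j$ equals $w_{a(i)}$ for exactly one $i$, and likewise for $b$. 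Hence $\sum_i U_{a(i)}^n=\sum_j U_j^n=\sum_i U_{b(i)}^n$, and $\partial_n^2=0$.

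In the knot case of Remark~\ref{rem:onevariable}, $a(1)=b(1)$, so the potential already vanishes and the claim is immediate.
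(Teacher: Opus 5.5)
Your proposal is correct and is essentially the argument behind the cited \cite[Lemma 2.6]{Dow18}, which the paper quotes without reproving: substituting $V_i=\sum_{j=0}^{n-1}U_{a(i)}^jU_{b(i)}^{n-1-j}$ into the curvature $\sum_i(U_{a(i)}-U_{b(i)})V_i$ of Remark~\ref{rem:curved} gives $\sum_i(U_{a(i)}^n-U_{b(i)}^n)$, which vanishes because $i\mapsto a(i)$ and $i\mapsto b(i)$ are both bijections onto the $\mathbf{w}$ basepoints. Your degeneration sketch of the curvature formula for punctured diagrams, with the regions containing $p$ excluded by $n_p=0$, is a reasonable justification of an input that the paper simply takes from \cite[Lemma 2.4]{Dow18}.
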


Let $\HFK_n(\mathcal{H})=H_*(\CFK_n(\mathcal{H}),\partial_n)$. With the following theorem, we can define the link invariants.

\begin{theorem}\label{thm:invariance}
\cite[Theorem 2.7]{Dow18} If $\mathcal{H}_1$ and $\mathcal{H}_2$ are two punctured Heegaard diagrams for a null-homologous link $L$ with $\ell$ components, then $\CFK_n(\mathcal{H}_1)$ and $\CFK_n(\mathcal{H}_2)$ are chain homotopy equivalent as complexes of graded $\Q[U_1,\dots,U_\ell]$-modules.
\end{theorem}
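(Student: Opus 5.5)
The plan is to reduce the statement to the known invariance of the master complex $\CFK_{U,V}$ under Heegaard moves, and then check that every map in that invariance argument passes to the quotient of Definition~\ref{def:CFKn}.

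By the standard results on multi-pointed Heegaard diagrams, adapted to punctured diagrams, any two punctured diagrams $\mathcal{H}_1,\mathcal{H}_2$ for $L$ are related by a finite sequence of the following moves, all performed in the complement of the puncture $p$:
\begin{itemize}
\item isotopies of the $\boldsymbol{\alpha}$ and $\boldsymbol{\beta}$ curves;
\item handleslides;
\item stabilizations and destabilizations of the Heegaard surface;
\item quasi-stabilizations, which add or remove a pair $w,z$ of basepoints on a link component together with a new $\alpha$ and $\beta$ curve.
\end{itemize}
The puncture itself, together with $\alpha_{g+k}$ and $\beta_{g+k}$, can be moved or handleslid over as in Remark~\ref{rem:puncture}, by viewing it as the extra unknot component whose basepoints $w_0,z_0$ coincide. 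The argument treats each move in turn.

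For the first three moves the strategy is the same. Following \cite{Zem19}, the continuation maps, the triangle maps for handleslides, and the stabilization maps each count holomorphic polygons with $n_p=0$, weighted by $U_i^{n_{w_i}}V_i^{n_{z_i}}$. This yields homotopy equivalences of curved complexes over $\Q[U_1,\dots,U_k,V_1,\dots,V_k]$ whose potentials agree, namely $\sum_i (U_{a(i)}-U_{b(i)})V_i$ on both sides, since these moves do not change the basepoint connectivity $a,b$.

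\textbf{The maps descend to the quotient.} The key observation is that all these maps and homotopies are $\Q[U,V]$-module maps. They therefore preserve the ideal $I$ generated by the elements $V_i-\frac{U_{a(i)}^n-U_{b(i)}^n}{U_{a(i)}-U_{b(i)}}$, and so induce maps on $\CFK_n=\CFK_{U,V}/I$. The relations $f\circ\partial=\partial\circ f$ and $gf-\mathrm{id}=\partial H+H\partial$ hold on the nose, not merely up to curvature, so they also hold after reduction mod $I$. Together with Theorem~\ref{thm:dsquared}, this shows that the induced maps are honest chain homotopy equivalences.

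\textbf{The quasi-stabilization move.} This is the main obstacle, because it changes the number of variables and also alters $a(i)$ and $b(i)$. I would first try the following. Write $U_{\mathrm{new}}$ for the variable of the new $w$ basepoint and $V_{\mathrm{new}}$ for the variable of the new $z$ basepoint. Show that $\CFK_{U,V}$ of the quasi-stabilized diagram is homotopy equivalent to the mapping cone of
\[
U_{\mathrm{new}}-U_{\mathrm{old}}
\]
on the original complex tensored with $\Q[U_{\mathrm{new}},V_{\mathrm{new}}]$. Here the new $V$ variable is identified with the appropriate difference-quotient relation, in the manner of Zemke's quasi-stabilization maps. After imposing $I$, this cone collapses to the original $\CFK_n$. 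The identity
\[
\frac{U_a^n-U_c^n}{U_a-U_c}=\frac{U_a^n-U_b^n}{U_a-U_b}+\Big(\text{terms divisible by }(U_b-U_c)\Big)
\]
is what makes the relations compatible under this collapse.

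\textbf{The $\Q[U_1,\dots,U_\ell]$-module structure.} Finally, variables $U_i,U_j$ belonging to the same component must act chain homotopically. For adjacent basepoints this follows because the basepoint-moving homotopy $\Phi$ satisfies $\partial\Phi+\Phi\partial=U_{a(i)}-U_{b(i)}$ up to the relation on $V_i$. This gives the stated equivalence as complexes of graded $\Q[U_1,\dots,U_\ell]$-modules.
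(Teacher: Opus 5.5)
The paper gives no proof of this statement; it is quoted from \cite[Theorem 2.7]{Dow18}. So your proposal can only be judged on its own merits. Two parts are fine. First, isotopies, handleslides and stabilizations: those maps are $\Q[U,V]$-linear homotopy equivalences of curved complexes with equal potential, so they descend modulo $I$. Second, the basepoint-moving step: $\Phi_i=\frac{\partial}{\partial V_i}\partial_{U,V}$ satisfies $\partial\Phi_i+\Phi_i\partial=U_{a(i)}-U_{b(i)}$ exactly.

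The gap is the quasi-stabilization step. You rightly call it the crux, but you only assert it, and the model you assert is wrong for the master complex. Write $a=a(i)$, $b=b(i)$, and insert the new pair $w',z'$ between $z_i$ and $w_b$. Afterwards $z_i$ is joined to $w_a,w'$ and $z'$ is joined to $w',w_b$. A mapping cone of $U'-U_{\mathrm{old}}$ on $C\otimes\Q[U',V']$ has the old potential $(U_a-U_b)V_i$. The new diagram has potential $(U_a-U')V_i+(U'-U_b)V'$. These differ by $(U'-U_b)(V'-V_i)$, so no homotopy equivalence of curved complexes between them exists. The one-sided cone is the correct statement only for $\CFK^-$, where $V=0$.

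The correct model, consistent with the potential and with Zemke's quasi-stabilization computation, is two-sided. Up to signs, on $C[U',V']\theta^+\oplus C[U',V']\theta^-$ it is
\[
\partial'=\begin{pmatrix}\partial & V'-V_i\\ U'-U_b & -\partial\end{pmatrix}.
\]
Put $h(x,y)=\frac{x^n-y^n}{x-y}$ and impose the new relations $V_i=h(U_a,U')$ and $V'=h(U',U_b)$. Let $C'$ be the resulting free $\Q[U_1,\dots,U_k,U']$-module. Then the backward entry becomes $h(U',U_b)-h(U_a,U')$, which is nonzero, so ``the cone collapses'' needs an actual argument. One that works:
\begin{itemize}
\item The map $C'\theta^+\oplus C'\theta^-\to C'/(U'-U_b)$, $(a,b)\mapsto b\bmod(U'-U_b)$, is a chain map whatever the backward entry is.
\item It is surjective and split over $\Q[U_1,\dots,U_k]$.
\item Its kernel $\{(a,(U'-U_b)a')\}$ is contractible via $(a,(U'-U_b)a')\mapsto(a',0)$.
\end{itemize}
Setting $U'=U_b$ turns $V_i=h(U_a,U')$ into $h(U_a,U_b)$, which recovers $\CFK_n(\mathcal{H})$. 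This is where your difference-quotient identity is genuinely used. It also shows that the model dictates which old variable $U'$ is identified with: identifying it with $U_a$ would give the wrong relation $V_i=nU_a^{n-1}$. With this step supplied, your outline becomes a complete argument.
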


\begin{definition}
\cite[Definition 2.8]{Dow18} Define $\HFK_n(L)=H_*(\CFK_n(\mathcal{H}))$ where $\mathcal{H}$ is any punctured Heegaard diagram for $L$.
\end{definition}

We perform some computations in Section~\ref{sec:examples}. These examples illustrate that, for knots, one has a simpler equivalent definition of $\HFK_n(K)$. In fact, in the case where $L=K$ is a knot, $\HFK_n(K)$ can be computed from $\CFK_{U,V}(K)$ without a punctured Heegaard diagram.

\begin{theorem}\label{thm:knot-quotient}
\cite[Lemma 2.16]{Dow18} If $\mathcal{H}$ is an unpunctured diagram for a knot $K$ in $S^3$ with a single pair of basepoints $w$ and $z$, then
\[
\HFK_n(K)=H_*\left(\frac{\CFK_{U,V}(\mathcal{H})}{(UV,V-nU^{n-1})}\right).
\]
\end{theorem}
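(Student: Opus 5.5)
The plan is to compare a punctured diagram for $K$ with an unpunctured one $\mathcal{H}=(\Sigma,\boldsymbol{\alpha},\boldsymbol{\beta},w,z)$, and to show that the puncture contributes a mapping cone of multiplication by $UV$. Everything below is a proposal; the geometric input in the middle step is not yet checked.

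First, build the punctured diagram $\mathcal{H}'$ from $\mathcal{H}$ by a local move. Choose a point $p$ in a region $R$ of $\Sigma\setminus(\boldsymbol{\alpha}\cup\boldsymbol{\beta})$. Add two small parallel-ish circles $\alpha_{g+1}$ and $\beta_{g+1}$ enclosing $p$ and meeting in two points $x^+,x^-$. This requires checking that $\alpha_{g+1},\beta_{g+1}$ separate $p$ from $w,z$ in the sense of Definition~\ref{def:punctured}; I would verify this from the placement of the small circles. The generators of $\CFK_{U,V}(\mathcal{H}')$ are then $\mathbf{x}\times\{x^\pm\}$. As a $\Q[U,V]$-module we get $\CFK_{U,V}(\mathcal{H}')\cong\CFK_{U,V}(\mathcal{H})\otimes\langle x^+,x^-\rangle$.

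Second, analyze the differential using a neck-stretching / connected-sum argument along a circle separating the new curves from the rest of $\Sigma$. This is the standard argument behind the Künneth formula for adding a free unknot component, here with $w_0=z_0=p$ as in Remark~\ref{rem:puncture}. I expect it to give a filtered complex of the form
\[
\CFK_{U,V}(\mathcal{H})\,x^+\xrightarrow{\ \ \partial_{U,V}\otimes 1+ f\ \ }\CFK_{U,V}(\mathcal{H})\,x^-.
\]
The two lunes between $\alpha_{g+1}$ and $\beta_{g+1}$ would cancel with opposite signs, except for domains that wrap around the small disk. Disks entering the disk bounded by the new curves must avoid $p$. The surviving contributions should therefore come from the annular domains that sweep once over the region containing $w$ and once over the region containing $z$. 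These are exactly the domains recording the curvature $\partial^2_{U,V}=(U_{a}-U_{b})V$ of Remark~\ref{rem:curved}, viewed on the two-component link $K\sqcup\mathcal{U}$. So I would aim to show that $f$ is multiplication by $UV$, up to sign and chain homotopy. \textbf{This is the main obstacle.} It requires a careful count of the Maslov index one domains with $n_p=0$ crossing the new curves, together with their orientations over $\Q$. I would first try to recover it formally: apply Dowlin's curvature formula to the two-component diagram and impose $U_0=U$ along the arcs through $p$. The quotient relation $V_0=\frac{U_{a(0)}^n-U_{b(0)}^n}{U_{a(0)}-U_{b(0)}}$ should then become trivial, leaving only the $UV$ term.

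Third, pass to $\CFK_n$. Impose $V=nU^{n-1}$ via Remark~\ref{rem:onevariable}. Then $C:=\CFK_{U,V}(\mathcal{H})/(V-nU^{n-1})$ is a free $\Q[U]$-complex, and by the previous step $\CFK_n(\mathcal{H}')\simeq\mathrm{Cone}(UV\colon C\to C)=\mathrm{Cone}(nU^n\colon C\to C)$. Since $n\in\Q^\times$ and $U^n$ is a nonzerodivisor on the free module $C$, the cone is quasi-isomorphic to $C/(U^n)=C/(nU^n)=\CFK_{U,V}(\mathcal{H})/(UV,\,V-nU^{n-1})$. Taking homology and invoking Theorem~\ref{thm:invariance} to remove the dependence on the choice of punctured diagram gives the stated formula.
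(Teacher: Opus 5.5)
Your skeleton matches the route the paper relies on via Dowlin. First identify $\CFK_{U,V}$ of the punctured diagram with the cone of $UV$ on $\CFK_{U,V}(\mathcal{H})$. Then, after setting $V=nU^{n-1}$, this becomes the cone of the nonzerodivisor $nU^n$ on a free $\Q[U]$-complex, hence quasi-isomorphic to $\CFK_{U,V}(\mathcal{H})/(UV,V-nU^{n-1})$. Your third step is exactly this and is correct.

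The gap is the second step, which is the entire geometric content of Dowlin's lemma, and the shortcut you propose for it cannot work.
\begin{itemize}
\item Read as $K\sqcup\mathcal{U}$ with $w_0=z_0=p$, the punctured diagram has one $w$ and one $z$ on each component, and no variable at all at $p$. So Remark~\ref{rem:curved} gives $\partial_{U,V}^2=0$, and there is no curvature for the connecting map to record.
\item More fundamentally, each copy of $\CFK_{U,V}(\mathcal{H})$ already satisfies $\partial_{U,V}^2=0$. So $\partial^2=0$ on the two-step complex only says that the connecting map $f$ is a chain map; it cannot single out $f=UV$ rather than, say, $f=0$.
\item There is no $V_0$ whose relation could ``become trivial'': discs through $p$ are simply discarded, and Definition~\ref{def:CFKn} imposes relations only at the $z_i$ of $L$.
\item Spotting the domains through $w$ and $z$ does not suffice. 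You would still have to control every index-one domain with $n_p=0$ that combines the new circles with an old domain meeting $R$. Only then do you know that the diagonal blocks are $\partial_{U,V}$ and that the connecting map is homotopic to $UV\cdot\mathrm{id}$.
\end{itemize}

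The clean way to close this is to choose $R$ to be the region containing $z$. Then $\mathcal{H}'$ is the connected sum of $\mathcal{H}$, at $z$, with the genus-zero punctured unknot diagram $\mathcal{H}_0$, at its $w$. This $\mathcal{H}_0$ is one $\alpha$ and one $\beta$ circle on $S^2$ meeting in $x^\pm$, with $p$ in the lens and $w,z$ in the outer region.

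On $\mathcal{H}_0$ everything is explicit:
\begin{itemize}
\item The condition $n_p=0$ excludes the lens.
\item Label so that the outer region is a bigon from $x^-$ to $x^+$. It passes once through $w$ and once through $z$, so $\partial x^-=\pm UV\,x^+$.
\item The two empty crescents run from $x^+$ to $x^-$, and they must cancel because $\partial^2x^+=0$.
\end{itemize}
Hence $\CFK_{U,V}(\mathcal{H}_0)$ is the cone of $UV$ on $\Q[U,V]$.

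Now apply the connected-sum (K\"unneth) formula $\CFK_{U,V}(\mathcal{H}\#\mathcal{H}_0)\simeq\CFK_{U,V}(\mathcal{H})\otimes_{\Q[U,V]}\CFK_{U,V}(\mathcal{H}_0)$. Its proof is the neck-stretching you had in mind, with the constraint $n_p=0$ living entirely on the $\mathcal{H}_0$ side. It gives the cone of $UV\cdot\mathrm{id}$ over $\Q[U,V]$, and your third step then applies verbatim.
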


\begin{remark}\label{rem:n=1}
For $n=1$, the relations in Theorem~\ref{thm:knot-quotient} become $U=0$ and $V=1$, so the quotient complex is $\widehat{CF}(S^3)$ and $\HFK_1(K)\cong\widehat{HF}(S^3)\cong\Q$ for every knot $K$ (cf.\ \cite{Dow18}). For this reason we assume $n\geq2$ whenever we compare $\HFK_n(K)$ with $\HFK^-(K)$.
\end{remark}

\section{Computational Evidence}\label{sec:examples}

\begin{figure}[ht]
\[
\begin{tikzcd}
\Q[U] & \Q[U] \arrow[l, "U"'] \arrow[d, "0"] \\
 & \Q[U]
\end{tikzcd}
\qquad\qquad
\begin{tikzcd}
\frac{\Q[U]}{(U^n)} & \frac{\Q[U]}{(U^n)} \arrow[l, "U"'] \arrow[d, "nU^{n-1}"] \\
 & \frac{\Q[U]}{(U^n)}
\end{tikzcd}
\]
\caption{On the left: the complex $\CFK^-(T_{2,3})$. On the right: the complex $\CFK_n(T_{2,3})$.}
\label{fig:trefoil}
\end{figure}

The master complex $\CFK_{U,V}(L)$ is well-known for torus knots \cite{OS05a}. An application of Theorem~\ref{thm:knot-quotient} simplifies the computations.

\begin{remark}\label{rem:ring}
Since $n$ is invertible in $\Q$, the map $\Q[U,V]\to\frac{\Q[U]}{(U^n)}$ sending $U\mapsto U$ and $V\mapsto nU^{n-1}$ induces a ring isomorphism $\frac{\Q[U,V]}{(UV,V-nU^{n-1})}\cong\frac{\Q[U]}{(U^n)}$: indeed $\frac{\Q[U,V]}{(V-nU^{n-1})}\cong\Q[U]$, and $UV$ maps to $nU^n$. In the following computations, we write $\frac{\Q[U]}{(U^n)}$ as the summand of the complexes $\CFK_n(K)$, with $V$ acting by $nU^{n-1}$. This module is not isomorphic, as a $\Q[U,V]$-module, to $\frac{\Q[U,V]}{(V,U^n)}$, on which $V$ acts by zero, although the two are isomorphic as $\Q[U]$-modules.
\end{remark}

\begin{example}\label{ex:trefoil}
The Trefoil.

As mentioned above, $\CFK_{U,V}(T_{p,q})$ can be read off from its Alexander polynomial. For the trefoil, it is well-known that $\Delta(T_{2,3})=t^{-1}-1+t$. Since the spacing between the powers of $t$ are only one each, we have only a single power of $U$ and $V$ as the maps on the complex, yielding the usual staircase picture:
\[
\begin{tikzcd}
\Q[U,V] & \Q[U,V] \arrow[l, "U"'] \arrow[d, "V"] \\
 & \Q[U,V]
\end{tikzcd}
\]
The complex $\CFK_n(T_{2,3})$ looks the same with the appropriate quotients. Both $\CFK^-(K)$ and $\CFK_n(K)$ are pictured in Figure~\ref{fig:trefoil}. On the level of homology, for $n\geq2$, we see the following:
\begin{align*}
\HFK_n(T_{2,3})&\cong\frac{\Q[U]}{(U^n)}\oplus\Q\oplus\Q\\
\HFK^-(T_{2,3})&\cong\Q[U]\oplus\Q.
\end{align*}
Notice that since $\Tor_1^{\Q[U]}(\Q\oplus\Q[U],\frac{\Q[U]}{(U^n)})\cong\Q$, by the universal coefficient theorem,
\begin{align*}
\HFK_n(T_{2,3})&\cong\HFK^-(T_{2,3})\tensQU\frac{\Q[U]}{(U^n)}\bigoplus\Tor_1^{\Q[U]}\Big(\HFK^-(T_{2,3});\frac{\Q[U]}{(U^n)}\Big)\\
&\cong(\Q[U]\oplus\Q)\tensQU\frac{\Q[U]}{(U^n)}\bigoplus\Q\\
&\cong\frac{\Q[U]}{(U^n)}\oplus\Q\oplus\Q.
\end{align*}
(For $n=1$ the answer is different: $\HFK_1(T_{2,3})\cong\Q$ by Remark~\ref{rem:n=1}.) We see similar behavior for a knot whose master complex has boxes and is not a torus knot.
\end{example}

\begin{figure}[ht]
\[
\begin{tikzcd}
\bullet \arrow[d, "V"'] & \bullet\rlap{$^{\bullet}$} \arrow[l, "U"'] \arrow[d, "-V"] \\
\bullet & \bullet \arrow[l, "U"]
\end{tikzcd}
\]
\[
\begin{tikzcd}
\bullet \arrow[d, "0"'] & \bullet\rlap{$^{\bullet}$} \arrow[l, "U"'] \arrow[d, "0"] \\
\bullet & \bullet \arrow[l, "U"]
\end{tikzcd}
\qquad\qquad
\begin{tikzcd}
\bullet \arrow[d, "nU^{n-1}"'] & \bullet\rlap{$^{\bullet}$} \arrow[l, "U"'] \arrow[d, "-nU^{n-1}"] \\
\bullet & \bullet \arrow[l, "U"]
\end{tikzcd}
\]
\caption{On the top: the complex $\CFK_{U,V}(4_1)$, where each dot represents the module $\Q[U,V]$. On the left: the complex $\CFK^-(4_1)$, where each dot represents the module $\Q[U]$. On the right: the complex $\CFK_n(4_1)$, where each dot represents the module $\frac{\Q[U]}{(U^n)}$.}
\label{fig:fig8}
\end{figure}

\begin{example}\label{ex:fig8}
The Figure-Eight.

We first look at the master complex for $4_1$, and then its quotients, as shown in Figure~\ref{fig:fig8}. The invariants for $4_1$ are, for $n\geq2$,
\begin{align*}
\HFK_n(4_1)&\cong\Q\oplus\Q\oplus\Q\oplus\Q\oplus\frac{\Q[U]}{(U^n)}\\
\HFK^-(4_1)&\cong\Q[U]\oplus\Q\oplus\Q.
\end{align*}
Again, the universal coefficient theorem gives us the following:
\begin{align*}
\HFK_n(4_1)&\cong\HFK^-(4_1)\tensQU\frac{\Q[U]}{(U^n)}\bigoplus\Tor_1^{\Q[U]}\Big(\HFK^-(4_1);\frac{\Q[U]}{(U^n)}\Big)\\
&\cong\frac{\Q[U]}{(U^n)}\oplus\Q\oplus\Q\oplus\Q\oplus\Q.
\end{align*}
\end{example}

\begin{remark}
While one might wish to explore examples with diagonal arrows, we note that all diagonal arrows in the master complex for a knot disappear when descending to any of the complexes at hand.
\end{remark}

\begin{example}\label{ex:T34}
The Torus Knot $T_{3,4}$.

Here $\Delta(T_{3,4})=t^{-3}-t^{-2}+1-t^{2}+t^{3}$, whose gaps are $1,2,2,1$, so $\CFK_{U,V}(T_{3,4})$ is the complex
\[
\begin{tikzcd}
x_1 & x_2 \arrow[l, "U"'] \arrow[d, "V^2"] & \\
 & x_3 & x_4 \arrow[l, "U^2"'] \arrow[d, "V"] \\
 & & x_5
\end{tikzcd}
\]
Take $n=2$. Since $V$ acts by $2U$ and $U^2=0$, in $\CFK_2(T_{3,4})$ we have $\partial_2x_2=Ux_1$ and $\partial_2x_4=2Ux_5$, while in $\CFK^-(T_{3,4})\otimes\frac{\Q[U]}{(U^2)}$ we have $\partial x_2=Ux_1$ and $\partial x_4=0$. Hence
\[
\HFK_2(T_{3,4})\cong\frac{\Q[U]}{(U^2)}\oplus\Q^4,
\]
of dimension $6$. On the other hand $\HFK^-(T_{3,4})\cong\Q[U]\oplus\frac{\Q[U]}{(U)}\oplus\frac{\Q[U]}{(U^2)}$, and the universal coefficient theorem gives
\[
\HFK^-(T_{3,4})\tensQU\frac{\Q[U]}{(U^2)}\bigoplus\Tor_1^{\Q[U]}\Big(\HFK^-(T_{3,4});\frac{\Q[U]}{(U^2)}\Big)\cong\Big(\frac{\Q[U]}{(U^2)}\Big)^3\oplus\Q^2,
\]
of dimension $8$. The value $6$ agrees with Dowlin's computation $\dim_\Q\HFK_2(T_{3,3k+1})=2+4k$ \cite[Lemma 5.9]{Dow18}. The discrepancy comes from the arrow $x_4\to 2Ux_5$: a vertical arrow of length one survives in $\CFK_2$, and there is no horizontal arrow of length at most $n-1=1$ leaving $x_4$ to absorb it (compare Lemma~\ref{lem:staircase} and Proposition~\ref{prop:sharp}). For $n\geq3$, both sides have dimension $n+6$.
\end{example}

\section{Proof of Main Theorem}\label{sec:proof}

Motivated by the preceding examples, we prove Theorem~\ref{thm:main-intro} using the universal coefficient theorem. In fact, since $\Q[U]$ is a principal ideal domain and $\CFK^-(K)$ is a finitely generated free complex over $\Q[U]$ \cite{Spa66} (see also \cite[Theorem 6.1]{HN13}),
\[
H_*\Big(\CFK^-(K)\tensQU\frac{\Q[U]}{(U^n)}\Big)\cong\Big[\HFK^-(K)\tensQU\frac{\Q[U]}{(U^n)}\Big]\bigoplus\Tor_1^{\Q[U]}\Big(\HFK^-(K);\frac{\Q[U]}{(U^n)}\Big).
\]
Here and below these are isomorphisms of ungraded $\Q[U]$-modules: the splitting is not natural, and in a graded statement the $\Tor$ summand appears with a shift in degree.

Recall that, by Theorem~\ref{thm:knot-quotient}, Dowlin's complex $\CFK_n(K)$ for knots may be replaced by the quotient
\[
\CFK_n(K)\simeq\frac{\CFK_{U,V}(K)}{(UV,V-nU^{n-1})}.
\]
In Examples \ref{ex:trefoil} and \ref{ex:fig8} in Section~\ref{sec:examples}, for $n\geq2$, this complex turns out to be isomorphic to
\[
\CFK^-(K)\tensQU\frac{\Q[U]}{(U^n)}
\]
which we can compute using the universal coefficient theorem. This motivates us to propose another definition:

\begin{definition}\label{def:Cn}
Given a finitely generated free chain complex $C=\bigoplus\Q[U,V]$ over $\Q[U,V]$, define
\begin{align*}
C_n&:=\frac{C}{(UV,V-nU^{n-1})}\\
\widetilde{C}_n&:=\frac{C}{(V,U^n)}.
\end{align*}
\end{definition}

\begin{remark}\label{rem:tensor}
Notice that
\[
\frac{C}{(UV,V-nU^{n-1})}\cong C\underset{\Q[U,V]}{\otimes}\frac{\Q[U,V]}{(UV,V-nU^{n-1})},
\]
and similarly that
\[
\frac{C}{(V,U^n)}\cong C\underset{\Q[U,V]}{\otimes}\frac{\Q[U,V]}{(V,U^n)}\cong\frac{C}{(V)}\tensQU\frac{\Q[U]}{(U^n)},
\]
so $\CFK_n(K)$ and $\widetilde{\CFK}_n(K):=\CFK_{U,V}(K)/(V,U^n)\cong\CFK^-(K)\otimes\frac{\Q[U]}{(U^n)}$ are as in Definition~\ref{def:Cn}. 
\end{remark}

In general, $C_n$ and $\widetilde{C}_n$ are not isomorphic as $\Q[U,V]$-modules or quasi-isomorphic, while their underlying $\Q[U]$-modules are isomorphic (Remark~\ref{rem:ring}). In fact, we have the following counterexample:

\begin{lemma}\label{lem:counterexample}
$C_n$ and $\widetilde{C}_n$ do not always have the same homology.
\end{lemma}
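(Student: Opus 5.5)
The statement is existential, so it suffices to exhibit one finitely generated free complex $C$ over $\Q[U,V]$ and one $n$ for which $H_*(C_n)\not\cong H_*(\widetilde{C}_n)$. The natural candidate is already in Example~\ref{ex:T34}: take $C=\CFK_{U,V}(T_{3,4})$ with generators $x_1,\dots,x_5$ and differential
\[
\partial x_2 = Ux_1+V^2x_3,\qquad \partial x_4=U^2x_3+Vx_5,
\]
all other generators being cycles. We take $n=2$. To keep the lemma self-contained, we can alternatively use the smaller complex generated by $x_3,x_4,x_5$ with $\partial x_4=U^2x_3+Vx_5$. Either way we verify $\partial^2=0$ over $\Q[U,V]$; this is immediate because every generator's image consists of cycles.

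Step one is to compute $\widetilde{C}_2=C/(V,U^2)$. By Remark~\ref{rem:tensor} each summand becomes $\Q[U]/(U^2)$ with $V$ acting by $0$. Thus $\partial x_2=Ux_1$ and $\partial x_4=U^2x_3+0=0$. The complex is then a direct sum of the two-term complex $x_2\to Ux_1$ and three trivial summands $x_3,x_4,x_5$. Each trivial summand contributes $\Q[U]/(U^2)$, which has dimension $2$. The summand $x_2\to Ux_1$ has kernel $\langle Ux_2\rangle$ and image $\langle Ux_1\rangle$, so it contributes $2$ dimensions of homology, spanned by $x_1$ and $Ux_2$. The total dimension is $8$.

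Step two is to compute $C_2=C/(UV,V-2U)$. By Remark~\ref{rem:ring} each summand is $\Q[U]/(U^2)$ with $V$ acting by $2U$, so $V^2=4U^2=0$. Hence $\partial x_2=Ux_1$ and $\partial x_4=2Ux_5$. The complex is now the sum of two copies of the length-one arrow complex, coming from the pairs $(x_2,x_1)$ and $(x_4,x_5)$, plus the free summand $x_3$. This gives $\dim_\Q H_*(C_2)=2+2+2=6$.

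Since $6\neq 8$, the homologies differ even as $\Q$-vector spaces, which proves the lemma. If we use the smaller three-generator complex instead, the same computation gives dimension $6$ for $\widetilde{C}_2$ and $4$ for $C_2$.

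No step here is a real obstacle. The one point needing care is to track correctly how $V$ acts in each quotient: by $0$ in $\widetilde{C}_n$ but by $nU^{n-1}$ in $C_n$. In particular, a $V$-labelled arrow survives in $C_n$ exactly when its power of $V$ is less than $n/(n-1)$; for $n=2$, $V$ survives as $2U$ while $V^2$ dies. We should also record that the underlying $\Q[U]$-modules of $C_2$ and $\widetilde{C}_2$ agree, namely $(\Q[U]/(U^2))^5$. This shows the difference lies purely in the differentials.
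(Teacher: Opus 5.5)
Your argument is correct. The counts $\dim_\Q H_*(\widetilde{C}_2)=8$ and $\dim_\Q H_*(C_2)=6$ for the $T_{3,4}$ complex, and $6$ versus $4$ for the truncation on $x_3,x_4,x_5$, all check out. Since the lemma is existential, a single value of $n$ suffices.

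The paper uses the same mechanism: a $V$-arrow of length one becomes $nU^{n-1}$ in $C_n$ but $0$ in $\widetilde{C}_n$. Its example is the minimal complex $\Q[U,V]\xrightarrow{V}\Q[U,V]$, which gives $H_*(C_n)\cong\big(\Q[U]/(U^{n-1})\big)^2$ versus $H_*(\widetilde{C}_n)\cong\big(\Q[U]/(U^{n})\big)^2$.

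The comparison between the two choices:
\begin{itemize}
\item The paper's example works uniformly in $n$. It is also the building block reused later in Lemma~\ref{lem:box} and Proposition~\ref{prop:sharp}.
\item After passing to the quotient at $n=2$, your truncated complex is just the paper's example plus a free summand on $x_3$, because $U^2=0$ there.
\item Your examples are tied to $n=2$. For $n\geq3$ the arrow $U^2x_3$ survives, and the substitution $x_3\mapsto x_3+nU^{n-3}x_5$ gives $C_n\cong\widetilde{C}_n$. This is consistent with the remark in Example~\ref{ex:T34} that both sides then have dimension $n+6$.
\item What your $T_{3,4}$ version buys is that the failure already occurs for an actual knot Floer complex. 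The paper records this separately in Example~\ref{ex:T34} and Corollary~\ref{cor:torus}. A knot counterexample for general $n$ would require $T_{p,q}$ with $p>n$.
\end{itemize}
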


\begin{proof}
Consider the complex $C$ given by
\[
\begin{tikzcd}
\Q[U,V] \arrow[r, "V"] & \Q[U,V].
\end{tikzcd}
\]
Then $C_n$ is
\[
\begin{tikzcd}
\frac{\Q[U]}{(U^n)} \arrow[r, "nU^{n-1}"] & \frac{\Q[U]}{(U^n)}
\end{tikzcd}
\]
and $\widetilde{C}_n$ is
\[
\begin{tikzcd}
\frac{\Q[U]}{(U^n)} \arrow[r, "0"] & \frac{\Q[U]}{(U^n)}.
\end{tikzcd}
\]
On the level of homology,
\begin{align*}
H_*(C_n)&\cong\frac{\Q[U]}{(U^{n-1})}\oplus\frac{\Q[U]}{(U^{n-1})}\\
H_*(\widetilde{C}_n)&\cong\frac{\Q[U]}{(U^n)}\oplus\frac{\Q[U]}{(U^n)}.
\end{align*}
\end{proof}

\begin{figure}[ht]
\[
\begin{tikzcd}[column sep=3em, row sep=1.8em]
x_1 & x_2 \arrow[l, "U^{k_1}"'] \arrow[d, "V^{k_2}"'] & & \\
 & x_3 & {} \arrow[l, "U^{k_3}"'] & \\
 & & \ddots & \\
 & & {} & x_{m-1} \arrow[l, "U^{k_{m-2}}"'] \arrow[d, "V^{k_{m-1}}"] \\
 & & & x_m
\end{tikzcd}
\qquad\qquad
\begin{tikzcd}[column sep=3em, row sep=1.8em]
x_1 \arrow[d, "V^{k_1}"'] & & \\
x_2 & x_3 \arrow[l, "U^{k_2}"'] & \\
 & \vdots \arrow[d, "V^{k_{m-2}}"'] & \\
 & x_{m-1} & x_m \arrow[l, "U^{k_{m-1}}"']
\end{tikzcd}
\]
\caption{The two possible staircase complexes: the left form and the right form.}
\label{fig:staircases}
\end{figure}

Although a general complex over $\Q[U,V]$ will not lead to isomorphic specializations, we observe that the complexes are chain homotopy equivalent in many important cases.

\subsection{$L$-space and thin knots}
The aforementioned classes of knots of interest are $L$-space knots and thin knots, as defined in Section~1; throughout, thinness refers to $\widehat{\HFK}(K;\Q)$. We now turn to defining the structures present in the knot Floer complexes of these kinds of knots.

\begin{definition}[Box]\label{def:box}
A chain complex $C=\bigoplus_{i=1}^4\Q[U,V]_{<x_i>}$ is a \emph{box complex} if it is in the form
\[
\begin{tikzcd}
x_2 \arrow[d, "V^l"'] & x_1 \arrow[l, "U^k"'] \arrow[d, "-V^l"] \\
x_4 & x_3 \arrow[l, "U^k"]
\end{tikzcd}
\]
for some positive integers $k$ and $l$, where each $x_i$ is the generator of $\Q[U,V]$ corresponding to the $i$-th summand of $C$.
\end{definition}

\begin{definition}[Staircase]\label{def:staircase}
A chain complex $C=\bigoplus_{i=1}^m\Q[U,V]_{<x_i>}$ is a \emph{staircase complex} if it is in one of the forms in Figure~\ref{fig:staircases} for some positive integers $k_i$ and odd integer $m\geqslant1$, where each $x_i$ is the generator of $\Q[U,V]$ corresponding to the $i$-th summand of $C$. In both forms the arrow between $x_i$ and $x_{i+1}$ has length $k_i$; in the left form the sources are the $x_i$ with $i$ even, and in the right form the $x_i$ with $i$ odd. Thus, in either form, for each source $x_i$,
\[
\partial x_i=U^{k_{i-1}}x_{i-1}+V^{k_i}x_{i+1},
\]
where terms involving $x_0$ or $x_{m+1}$ are omitted. When $m=1$, $C=\Q[U,V]_{<x_1>}$ with zero differential.
\end{definition}

\begin{example}
The master complex for the trefoil, $\CFK_{U,V}(T_{2,3})$ as in Example~\ref{ex:trefoil}, is a staircase complex.
\end{example}

\begin{figure}[ht]
\[
\begin{tikzcd}[row sep=small, column sep=large]
x_1 & x_2 \arrow[dl] \arrow[ddddr] & x_3 \\
Ux_1 & Ux_2 \arrow[dl] & Ux_3 \\
U^2x_1 & U^2x_2 & U^2x_3 \\
\vdots & \vdots & \vdots \\
U^{n-1}x_1 & U^{n-1}x_2 & U^{n-1}x_3
\end{tikzcd}
\]
\caption{The tower (vector space) structure of the complex $\CFK_n(T_{2,3})$ (the complex shown in Figure~\ref{fig:trefoil}). For example, $\partial_n(x_2)=Ux_1+nU^{n-1}x_3$ and $\partial_n(Ux_2)=U^2x_1$.}
\label{fig:towers}
\end{figure}

\begin{remark}\label{rem:towers}
Given a chain complex $C$ over $\Q[U,V]$, we often have Figure~\ref{fig:towers} in mind for the complexes $C_n$ and $\widetilde{C}_n$. Each generator corresponding to a summand has a tower of chains, and the differential is marked by arrows between them. Figure~\ref{fig:towers} shows $\CFK_n(T_{2,3})$. This structure also clearly shows $C$ as a $\Q$-vector space.
\end{remark}

With these definitions in place, we appeal to two structure theorems concerning the master complex $\CFK_{U,V}(K)$ of thin and $L$-space knots. Both are statements up to homotopy equivalence over $\Q[U,V]$. The proofs for these and Theorem \ref{thm:main} rely on the following Lemma:

\begin{lemma}[Cancellation lemma]\label{lem:cancellation}
\cite[Lemma 4.1]{HN13} Let $(C,\partial)$ be a chain complex of $\mathcal{R}$-modules, freely generated by chains $\{x_i\}$, and suppose that $d(x_k,x_l)$ is a unit in $\mathcal{R}$, where
\[
\partial(x_k)=\sum_{m\neq l}a_mx_m+d(x_k,x_l)x_l.
\]
Then we can define a complex $(C',\partial')$, freely generated by $\{x_i\mid i\neq k,\ i\neq l\}$, which is chain homotopy equivalent to $(C,\partial)$. Writing $d(x_i,x_j)$ for the coefficient of $x_j$ in $\partial x_i$, the differential is given by $d'(x_i,x_j)=d(x_i,x_j)-d(x_i,x_l)\,d(x_k,x_l)^{-1}\,d(x_k,x_j)$.
\end{lemma}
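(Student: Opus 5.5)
The plan is to prove the lemma by Gaussian elimination: split off the two-generator subcomplex spanned by $x_k$ and $\partial x_k$, which is contractible, and identify the quotient by it with $(C',\partial')$. Set $u=d(x_k,x_l)$, a unit by hypothesis, and let $A\subset C$ be the $\mathcal{R}$-submodule spanned by $x_k$ and $\partial x_k$. Since $\partial(\partial x_k)=0$, $A$ is a subcomplex.

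First I check that $\{x_k,\partial x_k\}\cup\{x_i\mid i\neq k,l\}$ is again a free basis of $C$. Because $u$ is invertible, we can solve for $x_l$:
\[
x_l=u^{-1}\Big(\partial x_k-\sum_{m\neq l}a_mx_m\Big).
\]
This change of basis is unitriangular up to the unit $u$, so the new set is a basis. It follows that $A$ is free on $x_k,\partial x_k$, and that $C/A$ is free on the images $\bar x_i$ for $i\neq k,l$. In particular $0\to A\to C\to C/A\to 0$ splits as a sequence of $\mathcal{R}$-modules. The complex $A$ is $\mathcal{R}\xrightarrow{\ \cdot 1\ }\mathcal{R}$, which is contractible: take the homotopy $h(\partial x_k)=x_k$, $h(x_k)=0$.

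Next I compute the induced differential on $C/A$ in the basis $\{\bar x_i\}$. For $i\neq k,l$,
\[
\partial x_i=\sum_{j\neq k,l}d(x_i,x_j)x_j+d(x_i,x_l)x_l+d(x_i,x_k)x_k .
\]
Modulo $A$ the $x_k$ term vanishes. Substituting the expression for $x_l$ gives $\bar x_l=-u^{-1}\sum_{m\neq l}a_m\bar x_m$, where $a_m=d(x_k,x_m)$ and the $m=k$ term is zero mod $A$. So the coefficient of $\bar x_j$ in $\bar\partial\bar x_i$ is $d(x_i,x_j)-d(x_i,x_l)u^{-1}d(x_k,x_j)$, which is exactly the formula for $d'$. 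Hence $(C/A,\bar\partial)\cong(C',\partial')$.

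It remains to show that the projection $\pi\colon C\to C/A$ is a chain homotopy equivalence, and I expect this to be the main, though routine, obstacle. I would write explicit maps rather than appeal to a general lemma. Let $p\colon C\to A$ be the $\mathcal{R}$-linear projection along the complementary span $D=\langle x_i\mid i\neq k,l\rangle$, and define $H=h\circ p\colon C\to C$. Define $\iota\colon C/A\to C$ by $\iota(\bar y)=y-\partial H y-H\partial y$ for $y\in D$. The steps are then:
\begin{itemize}
\item $\iota$ is well defined and a chain map;
\item $\pi\iota=\mathrm{id}$, since $\pi\partial H$ and $\pi H\partial$ land in $A$;
\item $\iota\pi=\mathrm{id}-(\partial H+H\partial)$ holds up to a correction term, and that term is absorbed by modifying $H$ using $h^2=0$ and $\partial h+h\partial=\mathrm{id}_A$.
\end{itemize}
A cleaner alternative is to invoke the standard fact that a degreewise-split short exact sequence $0\to A\to C\to C/A\to0$ with $A$ contractible makes $C\to C/A$ a homotopy equivalence. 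This follows from the long exact sequence in homotopy classes $[P,-]$ for free $P$, or by transferring the contraction through the splitting. Either route gives $C\simeq C'$. I would also remark that if the $x_i$ are homogeneous with respect to some grading and $u$ has degree zero, all the maps above are graded.
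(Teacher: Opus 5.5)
Your argument is correct and is essentially the standard Gaussian-elimination proof behind the cited \cite[Lemma 4.1]{HN13}; the paper itself gives no proof. Your map $\iota$ sends $\bar x_i$ to $x_i-d(x_i,x_l)\,d(x_k,x_l)^{-1}x_k$, which is exactly the usual change of basis splitting $C\cong C'\oplus\langle x_k,\partial x_k\rangle$ as complexes. One small simplification: there is no correction term, since $H$ vanishes on $D$ and $\partial H+H\partial=\mathrm{id}$ on $A$, so $\iota\pi=\mathrm{id}-(\partial H+H\partial)$ holds exactly.
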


\begin{theorem}\label{thm:thin-structure}
(cf.\ \cite[Lemma 7]{Pet13}) If a knot $K$ is Floer homologically thin, then $\CFK_{U,V}(K)$ is homotopy equivalent over $\Q[U,V]$ to a direct sum of one staircase complex with all $k_i=1$ and $m=2|\tau(K)|+1$, of the left form in Figure~\ref{fig:staircases} if $\tau(K)\geq0$ and of the right form if $\tau(K)<0$, and box complexes with $k=l=1$.
\end{theorem}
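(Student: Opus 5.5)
The plan is to reduce to Petkova's structure result for thin knots and then translate it into the present setting: coefficients $\Q$, and the master complex $\CFK_{U,V}$ over $\Q[U,V]$ rather than the filtered $\mathbb{Z}\oplus\mathbb{Z}$ complex over $\mathbb{F}[U,U^{-1}]$.

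\textbf{Step 1: reduced model.} Choose a Heegaard diagram with one pair of basepoints. By repeatedly applying Lemma~\ref{lem:cancellation} over $\Q[U,V]$ to arrows whose coefficient is a nonzero rational, we pass to a homotopy equivalent free complex in which every coefficient of $\partial$ lies in the maximal ideal $(U,V)$. Such a reduced complex has one generator for each $\Q$-basis element of $\widehat{\HFK}(K;\Q)$. Each generator carries a Maslov grading $M$ and an Alexander grading $A$, and the differential lowers $M$ by one. A term $U^aV^b y$ in $\partial x$ satisfies
\[
M(y)=M(x)-1+2a,\qquad A(y)=A(x)+a-b .
\]

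\textbf{Step 2: thinness forces length-one arrows.} Put $\delta=A-M$. Since $K$ is thin, $\delta$ is constant on generators, and subtracting the two relations above gives $a+b=1$. So every surviving arrow is $U$ times a generator or $V$ times a generator, with coefficient in $\Q^\times$; diagonal and longer arrows are ruled out by grading alone.

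\textbf{Step 3: classify the complexes.} A complex whose arrows are all horizontal or vertical of length one splits into two $\Q$-vector space complexes: the horizontal part, where $U$ is set to $1$ and $V$ to $0$, and the vertical part, with the roles swapped. Both compute $\widehat{HF}(S^3;\Q)=\Q$. Following Petkova's argument, I would change basis over $\Q$ inside each bigrading, which is legitimate because coefficients are units, so that both the horizontal and vertical differentials are in Jordan normal form. Since $\partial^2=0$ and the two differentials anticommute, the complex then decomposes into length-one boxes plus a single component carrying the surviving homology. That component is a zigzag staircase with all $k_i=1$. Its length is $m=2|\tau|+1$, and its orientation follows the sign of $\tau$, because the generator surviving in horizontal homology sits in Alexander grading $\tau$ (equivalently, $\HFK^-$ has its tower in grading $-\tau$).

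\textbf{Main obstacle.} The hard step is carrying out this simultaneous normal form over $\Q$ rather than $\mathbb{F}_2$. Petkova's original proof is written mod $2$, so the signs need tracking; this is where the $-V$ in Definition~\ref{def:box} comes from. The approach I would take is induction on the number of generators. Pick a horizontal arrow $x\to Uy$ whose source $x$ has maximal Alexander grading and which does not lie in the staircase. Use base changes $x'=x+\sum c_i x_i$ within the same bigrading, with $c_i\in\Q$, to make $x\to Uy$ the only horizontal arrow into $y$ and out of $x$. Then $\partial^2=0$ forces the vertical arrows at $x$ and $y$ to pair them with another such horizontal pair, yielding a box that splits off as a direct summand. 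What remains is a complex with fewer generators and trivial-rank homology outside a single generator, to which the induction applies, ending in the staircase.
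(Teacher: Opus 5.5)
Your Steps 1 and 2 match the paper. Cancellation gives a reduced model $\Omega\otimes\Q[U,V]$ with $\partial=U\xi+V\eta$, and $(\Omega,\xi)$ and $(\Omega,\eta)$ each have one-dimensional homology. The gap is Step 3. It is the whole content of the theorem, and you assert it rather than prove it. The difficulty is also not sign-tracking over $\Q$ versus $\mathbb{F}_2$; it is the decomposition itself.

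In general, two anticommuting square-zero maps cannot be brought simultaneously into box-and-zigzag form, and the relations plus the homology count do not suffice. Ignoring gradings, the complex $\partial a=(U+V)b$ (that is, $\xi a=\eta a=b$) is indecomposable, is neither a box nor a staircase, and is acyclic for both $\xi$ and $\eta$. What rules out such summands is that $\xi$ raises and $\eta$ lowers $A$ by one. This has to be used structurally, not only as a constraint on base changes.

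Your inductive step also fails as written. Choosing an arrow ``not in the staircase'' presupposes the staircase you are trying to construct. Moreover, $\partial^2=0$ only gives $\xi^2=\eta^2=0$ and $\xi\eta=-\eta\xi$, which a staircase such as $\partial x_2=Ux_1+Vx_3$ satisfies with no box present. The correct criterion is $\xi\eta x\neq0$. In that case $x,\xi x,\eta x,\xi\eta x$ span a free rank-one module over $\Lambda=\Q\langle\xi,\eta\rangle/(\xi^2,\eta^2,\xi\eta+\eta\xi)$. Even then you need a complement that is itself a sub\emph{complex}. Your base changes do not provide one, since clearing horizontal arrows can create vertical ones. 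The paper instead takes $\ker\rho$ for an explicit graded $\Lambda$-linear retraction $\rho$ onto the box.

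The terminal case of your induction, when no box remains, is exactly where the staircase has to be produced, and you skip it. The paper argues as follows. Once $\xi\eta=0$, the remainder is a representation of a quiver whose underlying graph, because of the Alexander grading, is a union of lines. By Gabriel's theorem it is a sum of interval modules, i.e.\ zigzags with all arrows of length one. An interval with $v$ vertices and $e_\xi+e_\eta=v-1$ arrows contributes $(v-2e_\xi)+(v-2e_\eta)=2$ to $\dim H_*(\Omega,\xi)+\dim H_*(\Omega,\eta)=2$. So there is exactly one interval, and $e_\xi=e_\eta$ forces $m$ odd.

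Finally, to get $m=2|\tau(K)|+1$ and the orientation you need both endpoints, not one. The $\xi$-survivor is the $\HFK^-$ tower generator, at $A=-\tau(K)$ (not $\tau(K)$, as your first clause says). The $\eta$-survivor lies at $A=\tau(K)$ by the symmetry exchanging $w$ and $z$. Comparing the two gives $m-1=2|\tau(K)|$ and tells you which end is which.
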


\begin{proof}

Here we need the gradings that were suppressed in Section~\ref{sec:background}. The master complex $\CFK_{U,V}(K)$ is bigraded by $(\mathrm{gr}_U,\mathrm{gr}_V)$, in which $U$ has bidegree $(-2,0)$, $V$ has bidegree $(0,-2)$ and the differential has bidegree $(-1,-1)$. The Alexander and Maslov gradings are $A=\frac{1}{2}(\mathrm{gr}_U-\mathrm{gr}_V)$ and $M=\mathrm{gr}_U$ \cite{Zem19,Hom21}.
 
\emph{Step 1: every arrow has length one.} Applying Lemma~\ref{lem:cancellation} repeatedly to arrows whose coefficients are invertible, we obtain a complex $\mathcal{C}$, homotopy equivalent to $\CFK_{U,V}(K)$ over $\Q[U,V]$, which is free on a homogeneous basis and satisfies $\partial\mathcal{C}\subset(U,V)\mathcal{C}$. The process terminates since there are finitely many generators, and it preserves the bigrading. Let $\Omega$ be the $\Q$-span of this basis, graded by $A$ as $\Omega=\bigoplus_a\Omega_a$. Since the differential of $\mathcal{C}/(U,V)$ vanishes, $\Omega\cong\widehat{\HFK}(K;\Q)$ as graded vector spaces. Since $K$ is thin, $A-M=-\frac{1}{2}(\mathrm{gr}_U+\mathrm{gr}_V)$ is constant on $\Omega$. If $cU^iV^jy$ with $c\neq0$ appears in $\partial x$ for basis elements $x$ and $y$, then comparing $\mathrm{gr}_U+\mathrm{gr}_V$ on both sides gives
\[
(\mathrm{gr}_U+\mathrm{gr}_V)(y)-2(i+j)=(\mathrm{gr}_U+\mathrm{gr}_V)(x)-2,
\]
so $i+j=1$. Hence there are $\Q$-linear maps $\xi,\eta:\Omega\to\Omega$ such that
\[
\partial=U\xi+V\eta
\]
on $\mathcal{C}=\Omega\otimes\Q[U,V]$, with $\xi(\Omega_a)\subset\Omega_{a+1}$ and $\eta(\Omega_a)\subset\Omega_{a-1}$.
 
\emph{Step 2: an exterior algebra.} Expanding $\partial^2=U^2\xi^2+UV(\xi\eta+\eta\xi)+V^2\eta^2=0$ gives
\[
\xi^2=0,\qquad\eta^2=0,\qquad\xi\eta=-\eta\xi.
\]
So $\Omega$ is a graded module over the exterior algebra $\Lambda=\Q\langle\xi,\eta\rangle/(\xi^2,\eta^2,\xi\eta+\eta\xi)$, and a decomposition $\Omega=\Omega'\oplus\Omega''$ into graded $\Lambda$-submodules gives a decomposition into subcomplexes
\[
\mathcal{C}=(\Omega'\otimes\Q[U,V])\oplus(\Omega''\otimes\Q[U,V]).
\] Moreover, the complex $(\Omega,\xi)$ is $\mathcal{C}\otimes\frac{\Q[U,V]}{(U-1,V)}\simeq\CFK^-(K)\otimes_{\Q[U]}\frac{\Q[U]}{(U-1)}$, whose homology is $\widehat{HF}(S^3)\cong\Q$ \cite{OS04}. In the same way $(\Omega,\eta)$ computes $\widehat{HF}(S^3)$ using the basepoint $w$. Thus
\[
\dim_\Q H_*(\Omega,\xi)=\dim_\Q H_*(\Omega,\eta)=1.
\]
 
\emph{Step 3: splitting off boxes.} Suppose $\xi\eta x\neq0$ for some $x\in\Omega_a$. Then $x$, $\xi x$, $\eta x$ and $\xi\eta x$ are linearly independent: $\xi x$ and $\eta x$ are nonzero and lie in $\Omega_{a+1}$ and $\Omega_{a-1}$, and $\xi\eta x=cx$ is impossible, since it would give $c^2x=(\xi\eta)^2x=-\xi^2\eta^2x=0$. So their span $\Gamma$ is a free $\Lambda$-submodule of rank one. Choose a linear map $\lambda:\Omega\to\Q$ which vanishes outside $\Omega_a$ with $\lambda(x)=0$ and $\lambda(\xi\eta x)=1$, and define $\rho:\Omega\to\Gamma$ by
\[
\rho(w)=\lambda(\xi\eta w)\,x-\lambda(\eta w)\,\xi x+\lambda(\xi w)\,\eta x+\lambda(w)\,\xi\eta x.
\]
A direct check using the relations of Step~2 shows that $\rho\xi=\xi\rho$ and $\rho\eta=\eta\rho$; since $\lambda$ is supported on $\Omega_a$, $\rho$ preserves the grading; and $\rho(x)=x$, so $\rho$ is the identity on $\Gamma$. Hence $\Omega=\Gamma\oplus\ker\rho$ as graded $\Lambda$-modules. In the basis
\[
x_1=x,\qquad x_2=\xi x,\qquad x_3=-\eta x,\qquad x_4=\eta\xi x,
\]
the complex $\Gamma\otimes\Q[U,V]$ is the box complex of Definition~\ref{def:box} with $k=l=1$: $\partial x_1=Ux_2-Vx_3$, $\partial x_2=Vx_4$ and $\partial x_3=Ux_4$. By induction on $\dim_\Q\Omega$,
\[
\Omega=\Gamma_1\oplus\cdots\oplus\Gamma_b\oplus\Omega',
\]
where each $\Gamma_i\otimes\Q[U,V]$ is a box complex with $k=l=1$ and $\xi\eta=\eta\xi=0$ on $\Omega'$.
 
\emph{Step 4: staircases.} Let $\Theta=\ker\xi\cap\ker\eta\subset\Omega'$, and let $\Psi$ be a graded complement of $\Theta$ in $\Omega'$. Since all products of $\xi$ and $\eta$ vanish on $\Omega'$, both maps send $\Omega'$ into $\Theta$ and vanish on $\Theta$. So $\Omega'$ is determined by the restrictions $\xi:\Psi_a\to\Theta_{a+1}$ and $\eta:\Psi_a\to\Theta_{a-1}$, which form a representation of the quiver with vertices $\Psi_a$ and $\Theta_a$ for $a\in\mathbb{Z}$ and these arrows; a decomposition of this representation is a decomposition of $\Omega'$ into graded $\Lambda$-submodules. The underlying graph of the quiver is a disjoint union of two lines
\[
\cdots\ \text{---}\ \Theta_{a-1}\ \text{---}\ \Psi_a\ \text{---}\ \Theta_{a+1}\ \text{---}\ \Psi_{a+2}\ \text{---}\ \cdots,
\]
and since $\Omega'$ is finite-dimensional, the representation is supported on a quiver of type $A$. By Gabriel's theorem \cite{Gab72}, it is a direct sum of interval representations, which are one-dimensional at each vertex of a segment of a line, zero elsewhere, and have every arrow in the segment an isomorphism. After rescaling the basis vectors along the segment, each such summand $\Upsilon$ gives a staircase complex $\Upsilon\otimes\Q[U,V]$ with all $k_i=1$, whose sources are the basis vectors lying in $\Psi$. It is of the left form if the endpoints of the segment lie in $\Theta$ (which includes the case $m=1$), and of the right form if they lie in $\Psi$.
 
\emph{Step 5: exactly one staircase.} The complexes $(\Gamma_i,\xi)$ and $(\Gamma_i,\eta)$ are acyclic. If an interval summand $\Upsilon$ has $v$ basis vectors, it has $v-1$ arrows, say $e_\xi$ of them given by $\xi$ and $e_\eta$ by $\eta$, each an isomorphism between one-dimensional spaces. Hence
\[
\dim_\Q H_*(\Upsilon,\xi)+\dim_\Q H_*(\Upsilon,\eta)=(v-2e_\xi)+(v-2e_\eta)=2.
\]
By Step~2 the total of these quantities over $\Omega$ is $2$, so there is exactly one interval summand $\Upsilon$, and $\dim_\Q H_*(\Upsilon,\xi)=\dim_\Q H_*(\Upsilon,\eta)=1$ forces $e_\xi=e_\eta$, so $m=v=2e_\xi+1$ is odd. Thus
\[
\mathcal{C}=\mathcal{S}\oplus\mathcal{B}_1\oplus\cdots\oplus\mathcal{B}_b,
\]
where $\mathcal{S}=\Upsilon\otimes\Q[U,V]$ is a staircase complex with all $k_i=1$ and each $\mathcal{B}_i=\Gamma_i\otimes\Q[U,V]$ is a box complex with $k=l=1$.
 
\emph{Step 6: the length of the staircase.} We use the convention \cite{Hom21}
\[
\tau(K)=-\max\{A(y):y\in\HFK^-(K)\text{ homogeneous},\ U^dy\neq0\text{ for all }d\}.
\] Number the generators of $\mathcal{S}$ as in Definition~\ref{def:staircase}. Since $\xi$ raises and $\eta$ lowers $A$ by one, in either form $A(x_j)=A(x_1)-(j-1)$. Each $\mathcal{B}_i/(V)$ has homology $\Q_{<x_2>}\oplus\Q_{<x_4>}$, which is $U$-torsion, so the $U$-nontorsion part of $\HFK^-(K)=H_*(\mathcal{C}/(V))$ comes from $\mathcal{S}/(V)$, where it is generated by $x_m$ in the left form and by $x_1$ in the right form; this generator has Alexander grading $-\tau(K)$. The symmetry of the master complex which exchanges the roles of $w$ and $z$ \cite[Section 3.5]{OS04} interchanges $U$ and $V$ and replaces $A$ by $-A$. So the $V$-nontorsion part of $H_*(\mathcal{C}/(U))$ lies in Alexander grading $\tau(K)$, and it is generated by $x_1$ in the left form and by $x_m$ in the right form. In the left form, $2\tau(K)=A(x_1)-A(x_m)=m-1$; in the right form, $-2\tau(K)=A(x_1)-A(x_m)=m-1$. Hence $m=2|\tau(K)|+1$, the staircase is of the left form if $\tau(K)\geq0$, and of the right form if $\tau(K)<0$.

\end{proof}

\begin{example}[The knot $5_2$]
We illustrate the proof with the knot $5_2$, which is alternating and hence thin, with $\Delta_{5_2}(t)=2t-3+2t^{-1}$ and $\det(5_2)=7$. Choose its chirality so that $\tau(5_2)=1$. Then $\widehat{\HFK}(5_2;\Q)$ has rank $2$, $3$ and $2$ in Alexander gradings $1$, $0$ and $-1$, with $M=A-1$. Consider the reduced complex $\mathcal{C}$ with basis $y_1,y_2$ in Alexander grading $1$, $y_3,y_4,y_5$ in grading $0$, and $y_6,y_7$ in grading $-1$, and differential
\[
\partial y_3=Uy_1+Vy_7,\qquad\partial y_4=Uy_2-Vy_6+2Vy_7,\qquad\partial y_2=Vy_5,\qquad\partial y_6=Uy_5,
\]
with $\partial y_1=\partial y_5=\partial y_7=0$. We have $\CFK_{U,V}(5_2)$, written in a basis in which its decomposition is not clear, so we follow the proof to recover it. Here,
\[
\xi(y_3)=y_1,\quad\xi(y_4)=y_2,\quad\xi(y_6)=y_5,\qquad\eta(y_2)=y_5,\quad\eta(y_3)=y_7,\quad\eta(y_4)=-y_6+2y_7,
\]
and $\xi$ and $\eta$ vanish on the remaining basis vectors. One checks the relations of Step~2, and that $\xi$ and $\eta$ each have rank $3$, so $\dim_\Q H_*(\Omega,\xi)=\dim_\Q H_*(\Omega,\eta)=7-6=1$.
 
For Step~3, $\xi\eta(y_4)=\xi(-y_6+2y_7)=-y_5\neq0$, so take $x=y_4$ and let $\lambda$ be the linear map with $\lambda(y_5)=-1$ which vanishes on the other basis vectors. Then $\rho$ fixes $y_2$, $y_4$ and $y_5$, sends $y_6$ to $y_6-2y_7$, and sends $y_1$, $y_3$ and $y_7$ to zero. The box is spanned by
\[
x_1=y_4,\qquad x_2=y_2,\qquad x_3=y_6-2y_7,\qquad x_4=y_5,
\]
and indeed $\partial x_1=Ux_2-Vx_3$, $\partial x_2=Vx_4$ and $\partial x_3=Ux_4$. The change of basis $y_6\mapsto y_6-2y_7$ is what separates the box from the staircase.
 
For Step~4, $\Omega'=\ker\rho$ is spanned by $y_1$, $y_3$ and $y_7$, with $\Theta$ spanned by $y_1$ and $y_7$ and $\Psi$ spanned by $y_3$. The single segment is $\Theta_1\ \text{---}\ \Psi_0\ \text{---}\ \Theta_{-1}$, whose endpoints lie in $\Theta$, so it gives the staircase of the left form with $x_1=y_1$, $x_2=y_3$ and $x_3=y_7$, that is, $\partial x_2=Ux_1+Vx_3$. Thus
\[
\CFK_{U,V}(5_2)\simeq\CFK_{U,V}(T_{2,3})\oplus\mathcal{B},
\]
where $\mathcal{B}$ is a box complex with $k=l=1$. As in Step~6, the $U$-nontorsion generator of $\HFK^-(5_2)$ is $x_3=y_7$, in Alexander grading $-1=-\tau(5_2)$, and $m=3=2\tau(5_2)+1$. By Remark~\ref{rem:thin-formula}, $\HFK_n(5_2)\cong\frac{\Q[U]}{(U^n)}\oplus\Q^6$ for $n\geq2$.

\end{example}

\begin{theorem}\label{thm:lspace-structure}
\cite[Theorem 1.2]{OS05a}, \cite[Remark 6.6]{Hom14} If a knot $K$ is an $L$-space knot, then $\CFK_{U,V}(K)$ is homotopy equivalent over $\Q[U,V]$ to a single staircase complex of the left form in Figure~\ref{fig:staircases}, whose step lengths $k_1,\dots,k_{m-1}$ are the gaps of $\Delta_K$.
\end{theorem}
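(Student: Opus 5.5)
The plan is to mirror the proof of Theorem~\ref{thm:thin-structure}: pass to a reduced model, then let the bigrading force the shape of the differential. Apply Lemma~\ref{lem:cancellation} repeatedly to replace $\CFK_{U,V}(K)$ by a homotopy equivalent complex $\mathcal{C}=\Omega\otimes\Q[U,V]$. Here $\Omega$ has a homogeneous basis, $\partial\mathcal{C}\subset(U,V)\mathcal{C}$, and $\Omega\cong\widehat{\HFK}(K;\Q)$ as bigraded vector spaces.

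The input from \cite{OS05a} is their computation of $\widehat{\HFK}$ for $L$-space knots. Write $\Delta_K(t)=\sum_{j=0}^{m-1}(-1)^jt^{e_j}$ with $e_0>\cdots>e_{m-1}$. Then $\widehat{\HFK}(K)$ is one-dimensional in each Alexander grading $e_j$ and zero elsewhere. Its Maslov gradings are $M_0=0$ and $M_{j}=M_{j-1}-1$ or $M_{j-1}-2(e_{j-1}-e_j)+1$, according as $j$ is odd or even. Their argument uses only the surgery exact triangle and the large-surgery formula, so it runs over any field once the relevant surgery is an $L$-space over that field. To make this coefficient point precise, I would use the footnote: the surgery is an $L$-space over $\mathbb{F}_2$ hence over $\Q$, and for $\Q$-coefficients one reruns their argument verbatim, or cites \cite[Remark 6.6]{Hom14}. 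So $\Omega$ has basis $x_1,\dots,x_m$ with $A(x_j)=e_{j-1}$, and the values of $(\mathrm{gr}_U,\mathrm{gr}_V)$ are determined from $M$ and $A$.

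Next, constrain the differential by gradings. A term $cU^aV^bx_j$ in $\partial x_i$ requires
\[
\mathrm{gr}_U(x_j)-2a=\mathrm{gr}_U(x_i)-1,\qquad \mathrm{gr}_V(x_j)-2b=\mathrm{gr}_V(x_i)-1,
\]
with $a,b\geq0$. From the formulas above, $\mathrm{gr}_U(x_j)$ and $\mathrm{gr}_V(x_j)$ have parity $j+1$ (shifted suitably), so arrows only go from even-indexed $x_i$ to odd-indexed $x_j$. On the odd generators, $\mathrm{gr}_U$ is strictly increasing as $j$ decreases, and $\mathrm{gr}_V$ is strictly increasing as $j$ increases. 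Checking the two inequalities therefore shows that the only possible terms in $\partial x_{2i}$ are $U^{k_{2i-1}}x_{2i-1}$ and $V^{k_{2i}}x_{2i+1}$, with $k_j=e_{j-1}-e_j$. For any farther odd $j$, one of $a,b$ would be negative. Hence
\[
\partial x_{2i}=c_iU^{k_{2i-1}}x_{2i-1}+c_i'V^{k_{2i}}x_{2i+1}
\]
for some $c_i,c_i'\in\Q$.

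It remains to show every $c_i,c_i'$ is nonzero and then rescale. Nonvanishing follows from the homology count used in Step~2 of the proof of Theorem~\ref{thm:thin-structure}: setting $U=1,V=0$ must give $\widehat{HF}(S^3)\cong\Q$, and so must $U=0,V=1$. A vanishing coefficient would produce extra homology, since the complex is a zigzag of at most one-dimensional pieces. Then rescale $x_1,x_2,\dots$ successively along the zigzag, using that $\Q$ is a field, to make all coefficients $1$. This yields exactly the left-form staircase of Figure~\ref{fig:staircases}, with steps $k_j$ equal to the gaps of $\Delta_K$.

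I expect the main obstacle to be the grading bookkeeping that rules out long arrows, namely monotonicity of $\mathrm{gr}_U$ and $\mathrm{gr}_V$ along the odd generators. This requires writing $\mathrm{gr}_U=M$ and $\mathrm{gr}_V=M-2A$ explicitly from the Ozsv\'ath--Szab\'o formula and checking the strict inequalities carefully, including the endpoint generators. The secondary point is justifying the \cite{OS05a} computation with $\Q$-coefficients.
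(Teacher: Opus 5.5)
\textbf{Gap: the direction of the arrows.} Your inference ``parity, so arrows only go from even-indexed to odd-indexed generators'' does not follow. Parity only tells you that an arrow joins generators of opposite parity; it says nothing about direction, and you never examine $\partial x_j$ for $j$ odd. Run your own inequalities for a term $cU^aV^bx_{2s}$ in $\partial x_j$ with $j$ odd. They give $2a=\mathrm{gr}_U(x_{2s+1})-\mathrm{gr}_U(x_j)+2$ and $2b=\mathrm{gr}_V(x_{2s-1})-\mathrm{gr}_V(x_j)+2$, and these have nonnegative solutions: $Ux_{2s}$ in $\partial x_{2s+1}$ whenever $k_{2s}=1$, and $Vx_{2s}$ in $\partial x_{2s-1}$ whenever $k_{2s-1}=1$.

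Your dimension count cannot exclude these backward arrows. For $T_{2,3}$, with bigradings $(0,-2),(-1,-1),(-2,0)$ on $x_1,x_2,x_3$, consider the right-form staircase $\partial x_1=Vx_2$, $\partial x_3=Ux_2$. It is homogeneous and reduced, satisfies $\partial^2=0$, and both $(\Omega,\xi)$ and $(\Omega,\eta)$ have one-dimensional homology. The same happens for every $T_{2,2k+1}$.

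What excludes it is the \emph{absolute} grading. $H_*(\Omega,\xi)$ must be $\Q$ in $\mathrm{gr}_V=0$, and $H_*(\Omega,\eta)$ must be $\Q$ in $\mathrm{gr}_U=0$. In the example, the survivor of $(\Omega,\xi)$ is $x_1$, with $\mathrm{gr}_V=-2$.

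With that input the argument closes:
\begin{enumerate}
\item The grading analysis shows every arrow joins adjacent generators and is labeled by a pure power of $U$ or of $V$. In $\xi$ all arrows point from $x_j$ to $x_{j-1}$, and in $\eta$ from $x_j$ to $x_{j+1}$.
\item Hence $\xi^2=\eta^2=0$ forces $\xi$ and $\eta$ to be matchings on the path $x_1,\dots,x_m$, with homology spanned by the unmatched generator.
\item On the odd generators, $\mathrm{gr}_V$ increases strictly to $\mathrm{gr}_V(x_m)=0$ and $\mathrm{gr}_U$ decreases strictly from $\mathrm{gr}_U(x_1)=0$; the even generators have odd gradings. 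So $x_m$ is the only generator with $\mathrm{gr}_V=0$, and $x_1$ the only one with $\mathrm{gr}_U=0$.
\item Therefore $\xi$ must pair $x_{2s}$ with $x_{2s-1}$, and $\eta$ must pair $x_{2s}$ with $x_{2s+1}$, for every $s$. This kills all the backward arrows and gives $c_s,c_s'\neq0$ at once.
\end{enumerate}
Your rescaling along the path then finishes. That rescaling is all the paper itself adds to its citations of Ozsv\'ath--Szab\'o and Hom.

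\textbf{Two smaller corrections.} First, your Maslov recursion is reversed. Ozsv\'ath--Szab\'o give $M_j=M_{j-1}-2(e_{j-1}-e_j)+1$ for $j$ odd and $M_j=M_{j-1}-1$ for $j$ even. For $T_{3,4}$ this yields $0,-1,-2,-5,-6$. Your version yields $0,-1,-4,-5,-6$, under which $V^2x_3$ is not even a homogeneous term of $\partial x_2$. Your later monotonicity claims hold only for the correct recursion.

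Second, the footnote runs in the wrong direction for you. It says an $\mathbb{F}_2$-$L$-space is a $\Q$-$L$-space, but the hypothesis here is a $\Q$-$L$-space surgery, so you cannot start from an $\mathbb{F}_2$-$L$-space. You genuinely have to rerun the Ozsv\'ath--Szab\'o argument with $\Q$ coefficients, as you also suggest.
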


Here \cite[Lemma 7]{Pet13} proves the decomposition of Theorem~\ref{thm:thin-structure} over $\mathbb{F}_2$, for the Alexander-filtered complex $\CFK^-(K)$; the proof of Theorem~\ref{thm:thin-structure} above works over any field, and so recovers it. Similarly, \cite[Theorem 1.2]{OS05a} determines $\widehat{\HFK}(K)$ of an $L$-space knot, and the staircase form of the full complex is a standard consequence \cite[Remark 6.6]{Hom14}; over $\Q$, the signs of the arrows can be normalized by rescaling generators, since the underlying graph of a staircase is a path.

\subsection{Main theorem}
To prove the main theorem, we employ the following algebraic lemmata along with Lemma \ref{lem:cancellation}. The second term in $d'$ in Lemma \ref{lem:cancellation} is the ``zigzag'' through the cancelled pair; it is exactly what the change of basis in the first case of the proof of Proposition~\ref{prop:sharp} keeps track of.

\begin{lemma}\label{lem:box}
Let $n\geq2$ and let $C$ be a box complex. If $l\geq2$ or $k\leq n-1$, then $C_n\cong\widetilde{C}_n$ as complexes of $\frac{\Q[U]}{(U^n)}$-modules; in particular
\[
H_*(C_n)\cong H_*(\widetilde{C}_n)\cong\left(\frac{\Q[U]}{(U^{\min(k,n)})}\right)^4
\]
is an isomorphism of $\Q[U]$-modules. If $l=1$ and $k\geq n$, then
\[
H_*(C_n)\cong\left(\frac{\Q[U]}{(U^{n-1})}\right)^4,\qquad H_*(\widetilde{C}_n)\cong\left(\frac{\Q[U]}{(U^n)}\right)^4.
\]
\end{lemma}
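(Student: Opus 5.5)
The plan is to work directly with the explicit differential of the box complex after passing to the quotients of Definition~\ref{def:Cn}. In $C$ we have
\[
\partial x_1=U^kx_2-V^lx_3,\qquad \partial x_2=V^lx_4,\qquad \partial x_3=U^kx_4,\qquad \partial x_4=0.
\]
By Remark~\ref{rem:ring}, both $C_n$ and $\widetilde{C}_n$ are free of rank four over $\frac{\Q[U]}{(U^n)}$ on $x_1,\dots,x_4$. In $C_n$ the variable $V$ acts by $nU^{n-1}$, and in $\widetilde{C}_n$ it acts by $0$. So $\widetilde{C}_n$ always has differential $\partial x_1=U^kx_2$ and $\partial x_3=U^kx_4$, with $x_2,x_4$ cycles. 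That is, $\widetilde{C}_n$ is a direct sum of two copies of the two-term complex $\frac{\Q[U]}{(U^n)}\xrightarrow{U^k}\frac{\Q[U]}{(U^n)}$. I will split into three cases.

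\emph{Case $l\geq2$.} Here $V^l$ acts on $C_n$ by $n^lU^{l(n-1)}$. Since $n\geq2$ we have $l(n-1)\geq 2(n-1)\geq n$, so this is zero modulo $U^n$. Therefore $C_n$ and $\widetilde{C}_n$ have literally the same differential in the basis $x_1,\dots,x_4$, and the identity map is the desired isomorphism.

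\emph{Case $l=1$, $k\leq n-1$.} Now the vertical arrows survive as $nU^{n-1}$, and I will absorb them by a change of basis in the spirit of Lemma~\ref{lem:cancellation}. Since $n-1-k\geq0$, I can set
\[
x_2'=x_2-nU^{n-1-k}x_3.
\]
Then $\partial x_1=U^kx_2-nU^{n-1}x_3=U^kx_2'$. Also
\[
\partial x_2'=nU^{n-1}x_4-nU^{n-1-k}U^kx_4=0.
\]
The differentials $\partial x_3=U^kx_4$ and $\partial x_4=0$ are unchanged. The basis change is triangular with unit diagonal, so $\{x_1,x_2',x_3,x_4\}$ is again a basis. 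In this basis $C_n$ has exactly the differential of $\widetilde{C}_n$, which gives the isomorphism of complexes of $\frac{\Q[U]}{(U^n)}$-modules.

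\emph{Homology in the first two cases.} It remains to compute the homology of the two-term complex $\frac{\Q[U]}{(U^n)}\xrightarrow{U^k}\frac{\Q[U]}{(U^n)}$. Its kernel is $(U^{\max(n-k,0)})$ and its cokernel is $\frac{\Q[U]}{(U^{\min(k,n)})}$. Both are isomorphic to $\frac{\Q[U]}{(U^{\min(k,n)})}$, so two copies give $\bigl(\frac{\Q[U]}{(U^{\min(k,n)})}\bigr)^4$.

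\emph{Case $l=1$, $k\geq n$.} Now $U^k=0$ in $\frac{\Q[U]}{(U^n)}$. So $\widetilde{C}_n$ has zero differential, with homology $\bigl(\frac{\Q[U]}{(U^n)}\bigr)^4$. Meanwhile $C_n$ has $\partial x_1=-nU^{n-1}x_3$ and $\partial x_2=nU^{n-1}x_4$. This is two copies of the complex from the proof of Lemma~\ref{lem:counterexample}, whose homology is $\bigl(\frac{\Q[U]}{(U^{n-1})}\bigr)^2$. Hence $H_*(C_n)\cong\bigl(\frac{\Q[U]}{(U^{n-1})}\bigr)^4$.

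The only real step is the basis change in the case $l=1$, $k\leq n-1$. Its key point is that $n-1-k\geq0$ exactly when $k\leq n-1$, so the exponent is a genuine power of $U$. The remaining work is bookkeeping of signs, since $n$ is invertible in $\Q$.
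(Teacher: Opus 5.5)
Your proposal is correct and follows the paper's proof essentially step for step. You use the same three cases, the same basis change $x_2'=x_2-nU^{n-1-k}x_3$ when $l=1$ and $k\leq n-1$, and the same reduction to two copies of the complex of Lemma~\ref{lem:counterexample} when $l=1$ and $k\geq n$. The differences are cosmetic: you kill $V^l$ for $l\geq2$ through the exponent bound $l(n-1)\geq n$ rather than the paper's use of $UV=0$, and you write out the kernel and cokernel computation that the paper calls direct.
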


\begin{proof}
Any box complex can be pictorally represented as in Definition~\ref{def:box}. Note that after the identifications $V=nU^{n-1}$ and $UV=0$ in the ground ring, if $l>1$ (here $n\geq2$ is used in the third line),
\begin{align*}
V^l&=V\cdot V^{l-1}\\
&=nU^{n-1}V^{l-1}\\
&=UV\cdot nU^{n-2}V^{l-2}\\
&=0.
\end{align*}

\begin{figure}[ht]
\[
\begin{tikzcd}
x_2 \arrow[d, "nU^{n-1}"'] & x_1 \arrow[l, "U^k"'] \arrow[d, "-nU^{n-1}"] \\
x_4 & x_3 \arrow[l, "U^k"]
\end{tikzcd}
\qquad\qquad
\begin{tikzcd}
x_2 \arrow[d, "0"'] & x_1 \arrow[l, "U^k"'] \arrow[d, "0"] \\
x_4 & x_3 \arrow[l, "U^k"]
\end{tikzcd}
\]
\caption{On the left: $C_n$ for $l=1$. On the right: $\widetilde{C}_n$ for $l=1$.}
\label{fig:box-l1}
\end{figure}

So we see that if $l>1$, the vertical arrows become zero in both $C_n$ and $\widetilde{C}_n$, and $C_n=\widetilde{C}_n$. Thus, we restrict our attention to when $l=1$. In this case, $C_n$ and $\widetilde{C}_n$ are as in Figure~\ref{fig:box-l1}. If $k\leq n-1$, put $x_2'=x_2-nU^{n-1-k}x_3$. Then
\[
\partial_nx_1=U^kx_2-nU^{n-1}x_3=U^kx_2',\qquad \partial_nx_2'=nU^{n-1}x_4-nU^{n-1-k}U^kx_4=0,
\]
so in the basis $\{x_1,x_2',x_3,x_4\}$ the differential of $C_n$ is that of $\widetilde{C}_n$. By direct computation using a similar diagram to Figure~\ref{fig:towers},
\[
H_*(C_n)\cong\left(\frac{\Q[U]}{(U^k)}\right)^4\cong H_*(\widetilde{C}_n).
\]
If $k\geq n$, then $U^k=0$ in the ground ring, so $C_n$ is the direct sum of two copies of the complex in the proof of Lemma~\ref{lem:counterexample} and $\widetilde{C}_n$ has zero differential; the stated homology follows from that computation.
\end{proof}

\begin{lemma}\label{lem:staircase}
Let $n\geq2$ and let $C$ be a staircase complex. Suppose that every vertical arrow of length one is accompanied by a horizontal arrow of length at most $n-1$ at its source (left form) or at its target (right form); that is,
\begin{enumerate}
\item if $C$ is of the left form, $k_{i-1}\leq n-1$ for every even $i$ with $k_i=1$;
\item if $C$ is of the right form, $k_{i+1}\leq n-1$ for every odd $i\leq m-2$ with $k_i=1$.
\end{enumerate}
Then $C_n\cong\widetilde{C}_n$ as complexes of $\frac{\Q[U]}{(U^n)}$-modules; in particular $H_*(C_n)\cong H_*(\widetilde{C}_n)$ is an isomorphism of $\Q[U]$-modules.
\end{lemma}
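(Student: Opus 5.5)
The plan is to imitate the proof of Lemma~\ref{lem:box}: write down $C_n$ and $\widetilde{C}_n$ explicitly over $\frac{\Q[U]}{(U^n)}$, then find a triangular change of basis of $C_n$ that carries its differential to that of $\widetilde{C}_n$.

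\emph{Reduction to length-one vertical arrows.} Using $V=nU^{n-1}$ and $UV=0$, I compute the image of each arrow in $C_n$. As in Lemma~\ref{lem:box}, $V^{k_i}=0$ whenever $k_i\geq2$, and $U^{k}=0$ whenever $k\geq n$. Horizontal arrows are the same in $C_n$ and $\widetilde{C}_n$, namely $U^{k}$, which is zero for $k\geq n$. A vertical arrow of length at least two is zero in both complexes. The only discrepancy is therefore each vertical arrow with $k_i=1$: in $C_n$ it carries $nU^{n-1}$, while in $\widetilde{C}_n$ it is zero.

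\emph{Left form.} Here the sources are $x_i$ with $i$ even, and $\partial x_i=U^{k_{i-1}}x_{i-1}+V^{k_i}x_{i+1}$. For each even $i$ with $k_i=1$, the hypothesis gives $k_{i-1}\leq n-1$. I replace the target $x_{i-1}$ by
\[
x_{i-1}'=x_{i-1}+nU^{n-1-k_{i-1}}x_{i+1}.
\]
Then $\partial_n x_i=U^{k_{i-1}}x_{i-1}'$. This basis change is harmless because $x_{i-1}$ and $x_{i+1}$ are both cycles (odd index), so $x_{i-1}'$ is again a cycle. The only other source with $x_{i-1}$ in its differential is $x_{i-2}$, with coefficient $V^{k_{i-2}}$. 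Rewriting $\partial x_{i-2}$ in the new basis introduces a correction term $-V^{k_{i-2}}nU^{n-1-k_{i-1}}x_{i+1}$.

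\emph{Main obstacle.} The crux is showing that these correction terms vanish. If $k_{i-2}\geq2$, then $V^{k_{i-2}}=0$. If $k_{i-2}=1$, the term is a multiple of $U^{n-1}\cdot U^{n-1-k_{i-1}}$. Writing $V=nU^{n-1}$, it equals $n^2U^{2n-2-k_{i-1}}x_{i+1}$, and since $k_{i-1}\leq n-1$ the exponent is at least $n-1$. That is not enough to kill it outright. So I would instead use $UV=0$ directly: $V\cdot U^{n-1-k_{i-1}}$ vanishes as long as $n-1-k_{i-1}\geq1$. The remaining case $k_{i-1}=n-1$ gives a term $n^2U^{n-1}x_{i+1}$, which is not obviously zero. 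There I would perform the basis changes in sequence from the top of the staircase downward, absorbing each leftover term by a further modification of $x_{i-1}'$. If that fails, I would instead modify the source, replacing $x_{i-2}$ by $x_{i-2}$ minus a suitable multiple of $x_i$; this is allowed because $\partial x_i$ is now $U^{k_{i-1}}x_{i-1}'$. Checking that this process closes up is where I expect the real work to lie.

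\emph{Right form.} This case is symmetric. Sources are the $x_i$ with $i$ odd, and a length-one vertical arrow $x_i\to x_{i+1}$ is paired with the horizontal arrow $x_{i+2}\to x_{i+1}$ of length $k_{i+1}\leq n-1$. I replace the source by
\[
x_i'=x_i-nU^{n-1-k_{i+1}}x_{i+2},
\]
exactly as with $x_2'$ in Lemma~\ref{lem:box}. Then $\partial_n x_i'=U^{k_{i-1}}x_{i-1}-nU^{n-1-k_{i+1}}V^{k_{i+2}}x_{i+3}$. The last term vanishes when $k_{i+2}\geq2$, or when $k_{i+2}=1$ and $n-1-k_{i+1}\geq1$, by $UV=0$. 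Otherwise I handle it inductively, processing indices from $i=m-2$ downward.

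Once the basis change is complete, the differentials of $C_n$ and $\widetilde{C}_n$ agree. This gives the isomorphism of complexes of $\frac{\Q[U]}{(U^n)}$-modules, and hence the stated isomorphism on homology.
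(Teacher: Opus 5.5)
Your basis changes are the right ones (they are exactly the paper's). However, the proof stops at the step you call the crux, and the left-form ``obstacle'' is an artifact of substituting into a half-changed basis.

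\textbf{Left form.} Define all new targets at once in terms of the \emph{original} generators: $y_{i-1}=x_{i-1}+\varepsilon_i\,nU^{n-1-k_{i-1}}x_{i+1}$ for each even $i$, with $\varepsilon_i=1$ if $k_i=1$ and $\varepsilon_i=0$ otherwise, and $y_m=x_m$. This change is unitriangular, so the $y_j$ form a basis of the span of the odd generators. Moreover $\partial_n x_i=U^{k_{i-1}}x_{i-1}+\varepsilon_i\,nU^{n-1}x_{i+1}=U^{k_{i-1}}y_{i-1}$ holds on the nose for every source simultaneously. Since the targets are cycles, the map $x_j\mapsto y_j$ (odd $j$), fixing even generators, is already the isomorphism $\widetilde{C}_n\to C_n$.

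Your correction term $-V^{k_{i-2}}nU^{n-1-k_{i-1}}x_{i+1}$ arises only because you wrote $x_{i-1}=x'_{i-1}-nU^{n-1-k_{i-1}}x_{i+1}$ while leaving $x_{i-3}$ unchanged. Replace $x_{i-3}$ by $y_{i-3}$; this is allowed, since $k_{i-2}=1$ forces $k_{i-3}\le n-1$. Then the correction recombines with $V^{k_{i-2}}x'_{i-1}$ into $V^{k_{i-2}}x_{i-1}$, which $y_{i-3}$ absorbs. Processing top-down, as you suggest, works for the same reason: no leftover is ever created. The ways you propose to absorb a leftover would not work, however. Changing $x'_{i-1}$ by a multiple of $x_{i+1}$ to cancel it puts $nU^{n-1}x_{i+1}$ back into $\partial_n x_i$. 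Subtracting multiples of $x_i$ from $x_{i-2}$ only alters the $x'_{i-1}$-coefficient.

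\textbf{Right form.} Here the correction term $-\varepsilon_{i+2}\,n^2U^{2n-2-k_{i+1}}x_{i+3}$ is genuine when you subtract the original $x_{i+2}$. What is missing is the explicit recursion: set $x'_m=x_m$ and $x'_i=x_i-\varepsilon_i\,nU^{n-1-k_{i+1}}x'_{i+2}$ for odd $i\le m-2$. Downward induction, using $\partial_n x'_{i+2}=U^{k_{i+1}}x_{i+1}$, gives $\partial_n x'_i=U^{k_{i-1}}x_{i-1}$, with $\partial_n x'_1=0$. The change is again unitriangular, so $x_i\mapsto x'_i$ for odd $i$, fixing even generators, is an isomorphism $\widetilde{C}_n\to C_n$ of complexes over $\Q[U]/(U^n)$. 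Your ``handle it inductively, processing indices from $i=m-2$ downward'' points in this direction. Without the formula and this one-line check, though, the argument is not complete.
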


\begin{proof}
Any staircase complex can be pictorally represented as in Definition~\ref{def:staircase}. Using a similar argument as in the proof of Lemma~\ref{lem:box}, every $V^{k_i}$ with $k_i\geq2$ vanishes in $C_n$, so $C_n$ and $\widetilde{C}_n$ differ only in the arrows $x_i\to nU^{n-1}x_{i+1}$ coming from sources $x_i$ with $k_i=1$. Put $\varepsilon_i=1$ if $k_i=1$ and $\varepsilon_i=0$ otherwise, so that for each source $x_i$,
\[
\partial_nx_i=U^{k_{i-1}}x_{i-1}+\varepsilon_i\,nU^{n-1}x_{i+1}.
\]
Rather than cancelling over $\Q$ (which is not $U$-equivariant), we change basis over $\frac{\Q[U]}{(U^n)}$.

(1) For even $i$, set $y_{i-1}=x_{i-1}+\varepsilon_i\,nU^{n-1-k_{i-1}}x_{i+1}$ and $y_m=x_m$. This is a change of basis of the odd generators, and $\partial_nx_i=U^{k_{i-1}}y_{i-1}$. A visual representation of this change of basis is as in Figure~\ref{fig:change-of-basis}. Since the odd generators are cycles, the $\frac{\Q[U]}{(U^n)}$-linear map $\widetilde{C}_n\to C_n$ fixing $x_i$ for $i$ even and sending $x_i\mapsto y_i$ for $i$ odd is an isomorphism of chain complexes.

(2) Define recursively $x_m'=x_m$ and $x_i'=x_i-\varepsilon_i\,nU^{n-1-k_{i+1}}x_{i+2}'$ for odd $i\leq m-2$. By induction, $\partial_nx_i'=U^{k_{i-1}}x_{i-1}$: indeed $\partial_nx_{i+2}'=U^{k_{i+1}}x_{i+1}$, so the second term cancels $\varepsilon_i\,nU^{n-1}x_{i+1}$. Since the even generators are cycles, the map $\widetilde{C}_n\to C_n$ sending $x_i\mapsto x_i'$ for $i$ odd and fixing $x_i$ for $i$ even is an isomorphism of chain complexes.

We can readily compute $H_*(C_n)\cong H_*(\widetilde{C}_n)$. The complex $\widetilde{C}_n$ is simply $\frac{m-1}{2}$ pairs of towers (as in Remark~\ref{rem:towers}) with arrows given by the horizontal arrows, and one single tower with no arrows. For the left form, writing $\kappa_s=\min(k_{2s-1},n)$,
\[
H_*(C_n)\cong H_*(\widetilde{C}_n)\cong\bigoplus_{s=1}^{\frac{m-1}{2}}\left(\frac{\Q[U]}{(U^{\kappa_s})}_{<x_{2s-1}>}\oplus\frac{\Q[U]}{(U^{\kappa_s})}_{<U^{n-\kappa_s}x_{2s}>}\right)\oplus\frac{\Q[U]}{(U^n)}_{<x_m>},
\]
and similarly for the right form, with $\kappa_s=\min(k_{2s},n)$, generators $x_{2s}$ and $U^{n-\kappa_s}x_{2s+1}$, and the free summand generated by $x_1$. The isomorphisms above are $\frac{\Q[U]}{(U^n)}$-linear, so these are isomorphisms of $\Q[U]$-modules.
\end{proof}

\begin{figure}[ht]
\[
\begin{tikzcd}[row sep=small, column sep=normal]
x_{i-1} & x_i \arrow[dddl] \arrow[ddddddr, red] & x_{i+1} \\
Ux_{i-1} & Ux_i \arrow[dddl] & Ux_{i+1} \\
\vdots & \vdots & \vdots \\
U^{k_{i-1}}x_{i-1} & U^{k_{i-1}}x_i & U^{k_{i-1}}x_{i+1} \\
U^{k_{i-1}+1}x_{i-1} & \vdots & \vdots \\
\vdots & & \\
U^{n-1}x_{i-1} & U^{n-1}x_i & U^{n-1}x_{i+1}
\end{tikzcd}
\qquad
\begin{tikzcd}[row sep=small, column sep=normal]
y_{i-1} & x_i \arrow[dddl] & x_{i+1} \\
Uy_{i-1} & Ux_i \arrow[dddl] & Ux_{i+1} \\
\vdots & \vdots & \vdots \\
U^{k_{i-1}}y_{i-1} & U^{k_{i-1}}x_i & U^{k_{i-1}}x_{i+1} \\
U^{k_{i-1}+1}y_{i-1} & \vdots & \vdots \\
\vdots & & \\
U^{n-1}y_{i-1} & U^{n-1}x_i & U^{n-1}x_{i+1}
\end{tikzcd}
\]
\caption{A local picture of the change of basis in the proof of Lemma~\ref{lem:staircase} (left form) for some even $i$ where $k_i=1$ and $k_{i-1}\leq n-1$. On the left: the complex $C_n$, with the arrow $x_i\to nU^{n-1}x_{i+1}$ in red. On the right: the same complex in the basis where $x_{i-1}$ is replaced by $y_{i-1}=x_{i-1}+nU^{n-1-k_{i-1}}x_{i+1}$; the red arrow has been absorbed, and the picture agrees with $\widetilde{C}_n$.}
\label{fig:change-of-basis}
\end{figure}

To conclude the main Theorem~\ref{thm:main-intro}, we make full use of the fact that, among a myriad of nice properties, all $L$-space knots and thin knots have a master complex given by direct sums of staircases and boxes as defined above:

\begin{theorem}\label{thm:main}
Let $n\geq2$, and let $K\subset S^3$ be either a thin knot, or an $L$-space knot such that every gap of $\Delta_K$ is at most $n-1$. Then, as ungraded $\Q[U]$-modules,
\[
\HFK_n(K)\cong\big(\HFK^-(K)\tensQU\QUn\big)\oplus\Tor_1^{\Q[U]}\Big(\HFK^-(K);\QUn\Big).
\]
\end{theorem}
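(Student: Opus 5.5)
The plan is to reduce everything to the model complexes of Theorems~\ref{thm:thin-structure} and~\ref{thm:lspace-structure}, then apply Lemmas~\ref{lem:box} and~\ref{lem:staircase} summand by summand.

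\textbf{Step 1: pass to the model complex.} By Theorem~\ref{thm:knot-quotient} and Remark~\ref{rem:tensor}, $\HFK_n(K)=H_*(C_n)$ for $C=\CFK_{U,V}(K)$. Also $\CFK^-(K)\otimes_{\Q[U]}\QUn\cong\widetilde{C}_n$. By the universal coefficient theorem recalled at the start of this section, the right-hand side of the theorem is $H_*(\widetilde{C}_n)$. So it suffices to show $H_*(C_n)\cong H_*(\widetilde{C}_n)$ as $\Q[U]$-modules.

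Both constructions $C\mapsto C_n$ and $C\mapsto\widetilde{C}_n$ are tensor products over $\Q[U,V]$ with a fixed module. A homotopy equivalence over $\Q[U,V]$ therefore descends to homotopy equivalences of $C_n$ and of $\widetilde{C}_n$ over $\QUn$, hence to isomorphisms on homology. Both constructions also commute with direct sums. So we may replace $C$ by the model given by Theorem~\ref{thm:thin-structure} or Theorem~\ref{thm:lspace-structure} and argue one summand at a time.

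\textbf{Step 2: the thin case.} The model is one staircase with all $k_i=1$ plus boxes with $k=l=1$.
\begin{itemize}
\item For each box, $k=1\le n-1$ because $n\ge2$, so Lemma~\ref{lem:box} gives $\mathcal{B}_n\cong\widetilde{\mathcal{B}}_n$.
\item For the staircase, every horizontal arrow has length $1\le n-1$. Hence the hypotheses (1) and (2) of Lemma~\ref{lem:staircase} hold automatically in either form, and $\mathcal{S}_n\cong\widetilde{\mathcal{S}}_n$.
\end{itemize}
Summing these isomorphisms gives the result.

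\textbf{Step 3: the $L$-space case.} The model is a single left-form staircase whose step lengths $k_i$ are the gaps of $\Delta_K$. By hypothesis every $k_{i-1}\le n-1$, so condition (1) of Lemma~\ref{lem:staircase} holds, in particular for every even $i$ with $k_i=1$. Lemma~\ref{lem:staircase} then gives $C_n\cong\widetilde{C}_n$.

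\textbf{Main obstacle.} The delicate point is Step 1. We must check that a $\Q[U,V]$-homotopy equivalence really induces a $\Q[U]$-module isomorphism on the homology of the quotients, and that the identification of $\widetilde{C}_n$ with $\CFK^-(K)\otimes\QUn$ is compatible with the universal coefficient splitting. Both reduce to the fact that tensoring preserves chain homotopies. The splitting is only an ungraded, non-natural isomorphism, which is why the theorem is stated for ungraded $\Q[U]$-modules.
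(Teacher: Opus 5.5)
Your proposal is correct and follows the paper's proof essentially step for step. You identify $\HFK_n(K)$ with $H_*(C_n)$ and, via the universal coefficient theorem, the right-hand side with $H_*(\widetilde{C}_n)$; you replace $C$ by its staircase/box model, using that both quotients are tensor products over $\Q[U,V]$; and you apply Lemmas~\ref{lem:box} and~\ref{lem:staircase} summand by summand, with the hypothesis checks matching the paper's.

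The one point the paper makes explicit that you pass over is that the identification in Theorem~\ref{thm:knot-quotient} is $\Q[U]$-linear. Dowlin's punctured complex is a mapping cone of $UV$, which becomes the cone of the non-zero-divisor $nU^n$ on a free $\Q[U]$-complex. This, rather than the compatibility issue you flag as the main obstacle, is what makes the conclusion an isomorphism of $\Q[U]$-modules.
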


\begin{proof}
The argument is a chain of isomorphisms given by Lemmata~\ref{lem:staircase}, \ref{lem:box}, and the universal coefficient theorem. Let $C=\CFK_{U,V}(K)$. By Theorem~\ref{thm:knot-quotient}, $\HFK_n(K)\cong H_*(C_n)$; this is an isomorphism of $\Q[U]$-modules, since Dowlin's proof exhibits the punctured complex as a mapping cone of $UV$, which after setting $V=nU^{n-1}$ is the cone of the non-zero-divisor $nU^n$ on a free $\Q[U]$-complex. By Theorems~\ref{thm:thin-structure} and~\ref{thm:lspace-structure}, $C$ is homotopy equivalent over $\Q[U,V]$ to a direct sum of one staircase complex and (for thin knots) box complexes. The constructions $C\mapsto C_n$ and $C\mapsto\widetilde{C}_n$ are given by tensoring with $\Q[U,V]$-modules (Remark~\ref{rem:tensor}), so they preserve homotopy equivalences and direct sums. For thin knots all $k_i=1\leq n-1$ and all boxes have $k=l=1\leq n-1$; for $L$-space knots the steps $k_i$ are the gaps of $\Delta_K$, all at most $n-1$ by hypothesis. Hence Lemmata~\ref{lem:staircase} and~\ref{lem:box} apply to every summand, and
\[
H_*(C_n)\cong H_*(\widetilde{C}_n)\cong H_*\Big(\CFK^-(K)\tensQU\frac{\Q[U]}{(U^n)}\Big),
\]
using $C/(V)\simeq\CFK^-(K)$ over $\Q[U]$. The universal coefficient theorem says
\[
H_*\left(\CFK^-(K)\tensQU\frac{\Q[U]}{(U^n)}\right)\cong\left[H_*\big(\CFK^-(K)\big)\tensQU\frac{\Q[U]}{(U^n)}\right]\bigoplus\Tor_1^{\Q[U]}\Big(\HFK^-(K);\frac{\Q[U]}{(U^n)}\Big).
\]
But
\[
\HFK^-(K)=H_*(\CFK^-(K))
\]
and Lemmata~\ref{lem:staircase} and~\ref{lem:box} yield
\[
\HFK_n(K)\cong H_*\Big(\CFK^-(K)\tensQU\frac{\Q[U]}{(U^n)}\Big).
\]
Piecing it all together gives the result.
\end{proof}

For $L$-space knots, the gap condition in Theorem~\ref{thm:main} is sufficient but not necessary. The following proposition computes $\HFK_n(K)$ for every $L$-space knot and every $n\geq2$. We write $R=\frac{\Q[U]}{(U^n)}$ for brevity.

Let $n\geq2$ and let $C$ be a staircase complex of the left form, with sources $x_2,x_4,\dots,x_{m-1}$. For even $i$, write $h_i=k_{i-1}$ and $v_i=k_i$ for the lengths of the horizontal and vertical arrows with source $x_i$, respectively. Call an even $i$ a \emph{defect} if $h_i\geq n$ and $v_i=1$. For a defect $i$, let $r(i)$ be the smallest even $r>i$ such that $r=m+1$ or $(h_r,v_r)\neq(n-1,1)$, and say the defect \emph{survives} if $r(i)=m+1$ or $h_{r(i)}\geq n$. Let $N_n(C)$ be the number of surviving defects. For an $L$-space knot $K$ we write $N_n(K)$ for $N_n(C)$, where $C$ is the staircase of Theorem~\ref{thm:lspace-structure}. Then we have the following:

\begin{proposition}\label{prop:sharp}
As $\Q[U]$-modules,
\[
H_*(C_n)\cong R\oplus\bigoplus_{i\text{ even}}\left(\frac{\Q[U]}{(U^{\min(h_i,n)})}\right)^2,
\]
except that $2N_n(C)$ of the summands isomorphic to $R$ are replaced by $\frac{\Q[U]}{(U^{n-1})}$. In particular,
\[
\dim_\Q H_*(C_n)=n+2\sum_{i\text{ even}}\min(h_i,n)-2N_n(C),\qquad \dim_\Q H_*(\widetilde{C}_n)=n+2\sum_{i\text{ even}}\min(h_i,n),
\]
and $H_*(C_n)\cong H_*(\widetilde{C}_n)$ if and only if $N_n(C)=0$, in which case $C_n\cong\widetilde{C}_n$ as complexes of $R$-modules.
\end{proposition}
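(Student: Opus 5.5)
The plan is to work entirely over $R=\frac{\Q[U]}{(U^n)}$ and to put $C_n$ into a normal form by $R$-linear changes of basis, as in the proof of Lemma~\ref{lem:staircase}, so that it splits as a direct sum of pieces. Each piece will either look like a two-tower summand of $\widetilde{C}_n$ or be the complex of Lemma~\ref{lem:counterexample}. As in Lemma~\ref{lem:staircase}, every $V^{v_i}$ with $v_i\geq2$ vanishes in $C_n$. Hence for each source $x_i$ we have
\[
\partial_nx_i=U^{h_i}x_{i-1}+\varepsilon_i\,nU^{n-1}x_{i+1},
\]
where $\varepsilon_i=1$ exactly when $v_i=1$, and $U^{h_i}=0$ when $h_i\geq n$. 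The complex $\widetilde{C}_n$ is the same with every $\varepsilon_i=0$, and its homology is the displayed sum with no replacements (computed as in Lemma~\ref{lem:staircase}). So everything reduces to understanding the vertical arrows $nU^{n-1}x_{i+1}$.

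First, I will treat the non-defect vertical arrows. These are the even $i$ with $v_i=1$ and $h_i\leq n-1$. For these I use the target substitution $y_{i-1}=x_{i-1}+nU^{n-1-h_i}x_{i+1}$ from case (1) of Lemma~\ref{lem:staircase}. This absorbs the arrow into the horizontal arrow at $x_i$, and it changes no other differential, because the odd generators are cycles. What remains is the defects, i.e.\ the sources with $\partial_nx_i=nU^{n-1}x_{i+1}$ and no horizontal term.

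Second, I will handle a defect $i$ by a source substitution that pushes it to the right. I replace $x_i$ by $x_i'=x_i-nU^{n-1-h_{i+2}}x_{i+2}$, which is allowed when $h_{i+2}\leq n-1$. Then $U^{n-1-h_{i+2}}U^{h_{i+2}}x_{i+1}$ cancels the defect arrow, and three cases arise.
\begin{itemize}
\item If $h_{i+2}\leq n-2$, the leftover vertical term is multiplied by a positive power of $U$ times $U^{n-1}$, so it vanishes. Then $x_i'$ is a cycle and the defect is killed.
\item If $h_{i+2}=n-1$ and $v_{i+2}\geq2$, the leftover is again zero, and the defect is killed.
\item If $(h_{i+2},v_{i+2})=(n-1,1)$, then $\partial_nx_i'=-n^2U^{n-1}x_{i+3}$. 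This is a defect-type arrow one step further, and I iterate.
\end{itemize}
The iteration stops at the index $r(i)$ from the definition. If $h_{r(i)}\leq n-1$ and $(h_{r(i)},v_{r(i)})\neq(n-1,1)$, the defect is killed. If instead $r(i)=m+1$ or $h_{r(i)}\geq n$, it survives. In the surviving case the modified source maps by a unit times $nU^{n-1}$ to the target $x_{r(i)-1}$, and nothing else hits $x_{r(i)-1}$: either $U^{h_{r(i)}}=0$, or $x_{r(i)-1}=x_m$. I also have to check two more things here. The source substitutions must not disturb the horizontal arrows already in normal form; this holds because the correction term only affects sources whose own horizontal arrow is $U^{h}$ with $h\geq n$, i.e.\ zero. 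And the chains starting from different defects must be disjoint, which holds because a chain passes only through indices with $(h,v)=(n-1,1)$, and these are not defects.

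Third, after these substitutions $C_n$ splits as a direct sum of three kinds of pieces. The first kind are pairs of towers $x_i\to U^{h_i}y_{i-1}$, contributing $\bigl(\frac{\Q[U]}{(U^{\min(h_i,n)})}\bigr)^2$. The second kind is the free tower $x_m$, contributing $R$ (or $x_m$ is absorbed into a surviving chain). The third kind, one for each surviving defect, is the complex $R\xrightarrow{nU^{n-1}}R$ of Lemma~\ref{lem:counterexample}. Each surviving block uses a source $x_i$ with $h_i\geq n$, which in $\widetilde{C}_n$ would contribute an $R$ from the tower at $x_i$. It also uses a target that in $\widetilde{C}_n$ is an isolated $R$: either the $x_{r(i)-1}$ tower with $h_{r(i)}\geq n$, or the free tower $x_m$. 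Lemma~\ref{lem:counterexample} turns these two copies of $R$ into two copies of $\frac{\Q[U]}{(U^{n-1})}$. Summing gives the stated module and the dimension formulas. When $N_n(C)=0$, all the substitutions are isomorphisms of $R$-complexes carrying $\widetilde{C}_n$ to $C_n$. When $N_n(C)>0$, the dimensions differ by $2N_n(C)$, which gives the ``if and only if''.

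The main obstacle is the bookkeeping in the second step: I must show that the source substitutions along a chain and the target substitutions of the first step can be carried out simultaneously without reintroducing arrows. I would first try ordering them as all target changes first, then source changes processed from right to left, and verify compatibility by a direct computation on a local window $x_{i-1},\dots,x_{i+3}$.
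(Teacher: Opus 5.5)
Your proposal is correct and is essentially the paper's argument. You use the same row substitution $x_{i-1}\mapsto x_{i-1}+\varepsilon_i\,nU^{n-1-h_i}x_{i+1}$, and the same column substitution carrying each defect rightward through indices with $(h,v)=(n-1,1)$ until it is absorbed or survives as a copy of the complex of Lemma~\ref{lem:counterexample}; the paper merely interleaves the two operations from left to right via its ``pending defect'' invariant. The compatibility you call the main obstacle is automatic once you record $\partial_n$ of each (modified) source as an element: it is $U^{h_i}y_{i-1}$, $0$, or a unit times $U^{n-1}x_{r(i)-1}$, and $x_{r(i)-1}$ is never a substituted target. Note, however, that your first-step claim that a row substitution ``changes no other differential'' is false as a statement about matrix entries for a defect column landing in row $i-1$ when $h_i=n-1$, which is why the paper clears the pending entry before doing that row operation.
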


\begin{proof}
As in the proof of Lemma~\ref{lem:staircase}, $C_n$ is a two-term complex of free $R$-modules, from the sources $x_i$ ($i$ even) to the targets $x_j$ ($j$ odd), with $\partial_nx_i=U^{h_i}x_{i-1}+\varepsilon_i\,nU^{n-1}x_{i+1}$, where $U^{h_i}=0$ if $h_i\geq n$. We record it as a matrix $D$ over $R$, with a column for each source and a row for each target; $\widetilde{C}_n$ has the matrix $\widetilde{D}$ with only the entries $U^{h_i}$ in positions $(i-1,i)$.

We process $i=2,4,\dots,m-1$ in order, by invertible $R$-linear changes of basis of the sources (column operations) and of the targets (row operations). We maintain the following invariant: when column $i$ is reached, row $i-1$ contains, besides its entry in column $i$, at most one other nonzero entry, of the form $cU^{n-1}$ with $c\in\Q^\times$, in an earlier column $j$ which has no other nonzero entry. We call it the \emph{pending defect}.
\begin{itemize}
\item If $h_i\leq n-1$ and there is a pending defect at $(i-1,j)$, replace $x_j$ by $x_j-cU^{n-1-h_i}x_i$. This kills the entry $(i-1,j)$ and writes $-c\,\varepsilon_i\,nU^{2n-2-h_i}$ at $(i+1,j)$, which is nonzero exactly when $h_i=n-1$ and $\varepsilon_i=1$; in that case it is the pending defect for row $i+1$. Then, as in Lemma~\ref{lem:staircase}, replace $x_{i-1}$ by $x_{i-1}+\varepsilon_i\,nU^{n-1-h_i}x_{i+1}$; since row $i-1$ now has a single entry, this kills $(i+1,i)$ and changes nothing else.
\item If $h_i\geq n$, column $i$ consists of the single entry $\varepsilon_i\,nU^{n-1}$ at $(i+1,i)$. If $i$ is a defect this is the pending defect for row $i+1$. A pending defect in row $i-1$, if any, is now final, since row $i-1$ has no other nonzero entry.
\end{itemize}
A new pending defect lands in row $i+1$ only if either $h_i\geq n$ or $(h_i,v_i)=(n-1,1)$, and these are exclusive, so each row carries at most one; this verifies the invariant. A defect $i$ is thus carried along the rows $i+1,i+3,\dots$ while $(h_r,v_r)=(n-1,1)$, and at the first even $r>i$ with $(h_r,v_r)\neq(n-1,1)$ it is absorbed if $h_r\leq n-1$ and becomes final if $h_r\geq n$; if no such $r\leq m-1$ exists, it becomes final in row $m$. So the final defects are exactly the surviving ones. (In the language of Lemma~\ref{lem:cancellation}, the column operation is the zigzag term created when the arrow $x_i\to U^{h_i}x_{i-1}$ is cancelled; this is the term missed by a naive cancellation argument.)

All operations are invertible and $R$-linear, and the final matrix is $\widetilde{D}$ together with one entry $cU^{n-1}$ for each surviving defect, placed in a column and a row which are zero in $\widetilde{D}$. Its homology is a direct sum of: $\big(\frac{\Q[U]}{(U^{h_i})}\big)^2$ for each $i$ with $h_i\leq n-1$, from $R\xrightarrow{U^{h_i}}R$; $\big(\frac{\Q[U]}{(U^{n-1})}\big)^2$ for each surviving defect, as in Lemma~\ref{lem:counterexample}; and one copy of $R$ for each remaining zero row or column. Comparing with $\widetilde{D}$, each surviving defect replaces two copies of $R$ by two copies of $\frac{\Q[U]}{(U^{n-1})}$, which gives the statement.
\end{proof}

\begin{remark}\label{rem:mirror}
The staircase of the right form with steps $k_1,\dots,k_{m-1}$ is the dual of the staircase of the left form with steps $k_{m-1},\dots,k_1$, and the master complex of the mirror of a knot is the dual of its master complex. Since $C_n$ is a two-term complex over the principal ideal ring $R$, its homology is determined by the Smith normal form of its matrix, which is unchanged by transposition. Hence Proposition~\ref{prop:sharp} also computes $H_*(C_n)$ for staircases of the right form, and in particular $\HFK_n$ of mirrors of $L$-space knots.
\end{remark}

We illustrate Proposition~\ref{prop:sharp} with two examples. The first is a staircase complex, not the master complex of a knot, chosen to show each of the three possibilities for a defect: it may survive, be carried to a later row, or be absorbed.
 
\begin{example}\label{ex:defects}
Let $C$ be the staircase complex of the left form with $m=9$ and steps
\[
(k_1,\dots,k_8)=(3,1,1,1,1,1,2,1),
\]
so that the sources $x_2,x_4,x_6,x_8$ have
\[
(h_2,v_2)=(3,1),\qquad(h_4,v_4)=(1,1),\qquad(h_6,v_6)=(1,1),\qquad(h_8,v_8)=(2,1).
\]
 
\emph{The case $n=2$.} Here $R=\frac{\Q[U]}{(U^2)}$, and in $C_2$,
\[
\partial_2x_2=2Ux_3,\quad\partial_2x_4=Ux_3+2Ux_5,\quad\partial_2x_6=Ux_5+2Ux_7,\quad\partial_2x_8=2Ux_9,
\]
since $U^3=U^2=0$. With columns indexed by the sources $x_2,x_4,x_6,x_8$ and rows by the targets $x_1,x_3,x_5,x_7,x_9$, the matrices $D$ and $\widetilde{D}$ of the proof of Proposition~\ref{prop:sharp} are
\[
D=\begin{bmatrix}0&0&0&0\\2U&U&0&0\\0&2U&U&0\\0&0&2U&0\\0&0&0&2U\end{bmatrix},\qquad
\widetilde{D}=\begin{bmatrix}0&0&0&0\\0&U&0&0\\0&0&U&0\\0&0&0&0\\0&0&0&0\end{bmatrix}.
\]
Both $x_2$ and $x_8$ are defects. Following the proof of Proposition~\ref{prop:sharp}, the entry $2Ux_3$ of $\partial_2x_2$ is the pending defect in row $x_3$. At $x_4$ we have $h_4=1\leq n-1$, so we replace $x_2$ by $x_2-2x_4$; then
\[
\partial_2(x_2-2x_4)=-4Ux_5,
\]
which is nonzero because $(h_4,v_4)=(n-1,1)$, so the defect is carried to row $x_5$. Replacing $x_3$ by $y_3=x_3+2x_5$ gives $\partial_2x_4=Uy_3$. In the same way, at $x_6$ we replace $x_2-2x_4$ by $x_2'=x_2-2x_4+4x_6$, so that $\partial_2x_2'=8Ux_7$, and $y_5=x_5+2x_7$ gives $\partial_2x_6=Uy_5$. At $x_8$ we have $h_8=2\geq n$, so the entry $8Ux_7$ cannot be cancelled: the defect of $x_2$ survives, with $r(2)=8$. The defect of $x_8$ survives as well, since $r(8)=m+1$. Thus $N_2(C)=2$. In the bases $\{x_2',x_4,x_6,x_8\}$ and $\{x_1,y_3,y_5,x_7,x_9\}$, the matrix of $\partial_2$ is
\[
\begin{bmatrix}0&0&0&0\\0&U&0&0\\0&0&U&0\\8U&0&0&0\\0&0&0&2U\end{bmatrix},
\]
which is $\widetilde{D}$ together with the two entries $8U$ and $2U$ coming from the surviving defects. Each nonzero entry contributes $\Q\oplus\Q$ to the homology, and the zero row $x_1$ contributes $R$. Hence
\[
H_*(C_2)\cong\frac{\Q[U]}{(U^2)}\oplus\Q^8,\qquad H_*(\widetilde{C}_2)\cong\Big(\frac{\Q[U]}{(U^2)}\Big)^5\oplus\Q^4,
\]
of dimensions $10$ and $14$, in agreement with $n+2\sum\min(h_i,n)-2N_2(C)=2+2(2+1+1+2)-4=10$.
 
\emph{The case $n=3$.} Here $R=\frac{\Q[U]}{(U^3)}$ and $\partial_3x_2=3U^2x_3$, so $x_2$ is still a defect, but now $x_8$ is not, since $h_8=2<n$. At $x_4$ we replace $x_2$ by $x_2-3Ux_4$, and
\[
\partial_3(x_2-3Ux_4)=3U^2x_3-3U(Ux_3+3U^2x_5)=-9U^3x_5=0.
\]
The defect is absorbed, because $(h_4,v_4)=(1,1)\neq(n-1,1)$; in the notation of Proposition~\ref{prop:sharp}, $r(2)=4$ and $h_4\leq n-1$. The remaining vertical arrows are absorbed as in Lemma~\ref{lem:staircase}, so $N_3(C)=0$, $C_3\cong\widetilde{C}_3$, and
\[
H_*(C_3)\cong H_*(\widetilde{C}_3)\cong\Big(\frac{\Q[U]}{(U^3)}\Big)^3\oplus\Big(\frac{\Q[U]}{(U^2)}\Big)^2\oplus\Q^4,
\]
which is dimension $17$.
\end{example}
 
\begin{example}\label{ex:T57}
The torus knot $T_{5,7}$ has genus $12$, and the gaps of $\Delta_{T_{5,7}}$ are
\[
(k_1,\dots,k_{16})=(1,4,1,1,1,2,1,1,1,1,2,1,1,1,4,1).
\]
So its complex is a staircase like that of Theorem~\ref{thm:lspace-structure}, which has $m=17$, and its sources $x_2,x_4,\dots,x_{16}$ have
\[
(h_i)=(1,1,1,1,1,2,1,4),\qquad(v_i)=(4,1,2,1,1,1,1,1).
\]
Take $n=2$. The defects are $x_{12}$, with $(h_{12},v_{12})=(2,1)$, and $x_{16}$, with $(h_{16},v_{16})=(4,1)$. The defect of $x_{12}$ is carried through $x_{14}$, since $(h_{14},v_{14})=(1,1)=(n-1,1)$, and survives at $x_{16}$, since $h_{16}\geq n$; the defect of $x_{16}$ survives since $r(16)=m+1$. So $N_2(T_{5,7})=2$, and by Proposition~\ref{prop:sharp},
\[
\HFK_2(T_{5,7})\cong\frac{\Q[U]}{(U^2)}\oplus\Q^{16},
\]
which has dimension $18$, while the right-hand side of Theorem~\ref{thm:main} has dimension $22$. For $n=3$ only the defect of $x_{16}$ remains, so $N_3(T_{5,7})=1$ and the dimensions are $23$ and $25$; for $n=4$ again $N_4(T_{5,7})=1$, with dimensions $26$ and $28$. For $n\geq5$ there are no defects, in accordance with Corollary~\ref{cor:torus}.
\end{example}

\begin{remark}\label{rem:thin-formula}
For a thin knot $K$ with $b$ boxes as in Theorem~\ref{thm:thin-structure}, $\HFK^-(K)\cong\Q[U]\oplus\Q^{|\tau(K)|+2b}$, and $\rk\widehat{\HFK}(K)=2|\tau(K)|+1+4b=\det(K)$, since $\widehat{\HFK}(K)$ is supported in a single $\delta$-grading. By Theorem~\ref{thm:main}, for $n\geq2$,
\[
\HFK_n(K)\cong\frac{\Q[U]}{(U^n)}\oplus\Q^{\frac{\det(K)-1}{2}}\oplus\Q^{\frac{\det(K)-1}{2}}=\frac{\Q[U]}{(U^n)}\oplus\Q^{\det(K)-1},
\]
which proves the second part of Corollary~\ref{cor:determined}. For $2$-bridge knots and $n=2$, this agrees with Dowlin's computation $\dim_\Q\HFK_2(K)=\det(K)+1$ \cite[Lemma 5.4]{Dow18}.
\end{remark}

\begin{proof}[Proof of Corollary~\ref{cor:torus}]
For $T_{p,q}$, let $S\subset\mathbb{Z}_{\geq0}$ be the semigroup generated by $p$ and $q$. The identity $\frac{1-t^{pq}}{(1-t^p)(1-t^q)}=\sum_{s\in S}t^s$ gives $\Delta_{T_{p,q}}(t)=(1-t)\sum_{s\in S}t^s$, normalized to have lowest degree $0$. So the exponents of $\Delta_{T_{p,q}}$ are the $s\in S$ with $s-1\notin S$ and the $s+1\notin S$ with $s\in S$, and the gaps are the lengths of maximal sequences of consecutive elements of $S$ and of consecutive non-elements of $S$. A sequence of non-elements has length at most $p-1$, since $s\in S$ implies $s+p\in S$, and the sequence $\{1,\dots,p-1\}$ attains this. Since $\Delta$ is symmetric, its sequence of gaps is a palindrome of even length, so the sequences of elements are the sequences of non-elements in reverse order. Hence the largest gap is length $p-1$.

If $n\geq p$, every gap is at most length $n-1$, and Theorem~\ref{thm:main} applies. If $2\leq n\leq p-1$, the lowest terms of $\Delta$ are $1-t+t^p$, so the first two gaps are $1$ and $p-1$, and by symmetry so are the last two. In its staircase of the shape in Theorem~\ref{thm:lspace-structure}, the last source $x_{m-1}$ therefore has $(h_{m-1},v_{m-1})=(p-1,1)$ with $p-1\geq n$; it is a defect with $r(m-1)=m+1$, so it survives and $N_n(T_{p,q})\geq1$. By Proposition~\ref{prop:sharp}, $\dim_\Q\HFK_n(T_{p,q})$ is smaller than the dimension of the right-hand side of Theorem~\ref{thm:main}. The case of $T_{3,4}$ is Example~\ref{ex:T34}.
\end{proof}

The same argument shows more generally that, since the top gap of the Alexander polynomial of a nontrivial $L$-space knot is $1$ \cite{HW18}, the isomorphism of Theorem~\ref{thm:main} fails for every nontrivial $L$-space knot whose second gap is at least $n$.

\bibliographystyle{amsalpha}
\bibliography{bib}

\end{document}